\theoremstyle{plain}
\newtheorem{thm}{Theorem}[section]
\newtheorem{lem}{Lemma}
\newtheorem{claim}[thm]{Claim}
\newtheorem{cor}[thm]{Corollary}
\newtheorem{prop}[thm]{Proposition}
\theoremstyle{definition}
\newtheorem{rem}[thm]{Remark}
\newtheorem{defn}[thm]{Definition}
\newtheorem{ex}[thm]{Example}
\newtheorem{fact}[thm]{Fact}
\newcommand{\X}{\mathfrak{X}}
\newcommand{\Z}{\mathfrak{Z}}
\newcommand{\LL}{\mathfrak{L}}
\newcommand{\pr}{\operatorname{pr}}
\newcommand{\id}{\operatorname{id}}
\newcommand{\OO}{\mathcal{O}}
\newcommand{\h}{\mathfrak{h}}
\newcommand{\del}{\partial}
\newcommand{\delb}{\overline{\partial}}
\newcommand{\real}[1]{\mathbb{B} #1 }
\newcommand{\LHT}{LHT}
\newcommand{\DM}{DM}
\newcommand{\ibid}{[ibid.]}
\newcommand{\Aut}{\operatorname{Aut}}
\newcommand{\ind}{\operatorname{ind}}
\newcommand{\abs}[1]{\left| #1 \right|}
\newcommand{\stack}[1]{\mathfrak{#1}}
\newcommand{\RR}{\mathbb{R}}
\newcommand{\CC}{\mathbb{C}}
\newcommand{\ZZ}{\mathbb{Z}}
\newcommand{\CP}{\mathbb{CP}}
\newcommand{\RP}{\mathbb{RP}}
\newcommand{\op}[1]{\operatorname{#1}}
\begin{document}

\title{Lefschetz Hyperplane Theorem for Stacks}
\author{Daniel Halpern-Leistner}

\begin{abstract}
We use Morse theory to prove that the Lefschetz Hyperplane Theorem holds for compact smooth Deligne-Mumford stacks over the site of complex manifolds.  For $\Z \subset \X$ a hyperplane section, $\X$ can be obtained from $\Z$ by a sequence of deformation retracts and attachments of high-dimensional finite disc quotients.  We use this to derive more familiar statements about the relative homotopy, homology, and cohomology groups of the pair $(\X,\Z)$.  We also prove some preliminary results suggesting that the Lefschetz Hyperplane Theorem holds for Artin stacks as well.  One technical innovation is to reintroduce an inequality of {\L}ojasiewicz which allows us to prove the theorem without any genericity or nondegeneracy hypotheses on $\Z$.
\end{abstract}

\maketitle

\section{Introduction}


Consider an $n$-dimensional complex manifold $X$ with a positive line bundle $L$ and a global section $s$ with zero locus $Z \subset X$.  The Lefschetz hyperplane theorem (henceforth \LHT) states that up to homotopy equivalence $X$ is obtained from $Z$ by attaching cells of dimension $n$ or larger, or equivalently that $\pi_i(X,Z) = 0$ for $i<n$.  By the homological \LHT\ we refer to the weaker statement that $H_i(X,Z) = H^i(X,Z) = 0$ for $i<n$.

The theorem fails for analytic spaces or complex varieties which are not smooth, although it still holds for projective varieties as long as the compliment of the hyperplane section is a local complete intersection.  See the book \cite{GoreskyMacPherson88} for an exhaustive treatment of the classical \LHT\ and its generalizations.  In this note we go in a different direction and prove the theorem for smooth Deligne-Mumford stacks.

It is not surprising that with rational coefficients, the homological \LHT\ holds for a variety with orbifold singularities.  Poincar\'{e}-Lefschetz duality holds rationally for such varieties, so the fact that $X-Z$ is affine and thus $H_i(X-Z) = 0$ for $i>n$ implies the homological \LHT.  One of the main points of this note, then, is that the classical \LHT\ still holds with \emph{integral} coefficients (and in fact with homotopy groups) as long as one considers the homology of the stack instead of the underlying singular variety.

In Section \ref{LHT_DM_stacks} we show how the ideas of Bott's Morse theoretic proof of the \LHT\ \cite{Bott59} extend to the setting of \DM\ stacks.  We prove our main version of the \LHT, which states that a stack can be constructed from a hyperplane section by a series of deformation retracts and attachments of high-dimensional disc quotients.  Then we show how the usual statements about relative homotopy groups and homology groups follow.  The homotopy groups of a stack are defined to be the homotopy groups of its classifying space -- this is discussed in more detail below.

In Section \ref{morse_theory_DM_stacks} we go back and carefully develop Morse theory of \DM\ stacks.  The main lemmas of Morse theory have been proven for the \emph{underlying space} of a \DM\ stack in \cite{Hepworth09}, but we observe that the proofs work for the stack itself.  In this paper, a cell attachment refers to an honest 2-categorical colimit.  This is the strongest notion of gluing discussed in \cite{Noohi05} -- it doesn't exist unless one is gluing along embedded substacks, and even then some care is required.  Thus the main technical hurdle in Section \ref{morse_theory_DM_stacks} is showing that a Morse function on $\X$ really does give a cell decomposition in the strongest sense.

Finally, in Section \ref{homological_artin_LHT} we discuss the \LHT\ for smooth Artin stacks.  We prove a version of the homological \LHT\ for differentiable stacks admitting presentations in which both $X_1$ and $X_0$ are compact (real) manifolds.  We also present some calculations which suggest that the homological \LHT\ holds for a much larger class of Artin stacks.

Another important point in this note is a classical result on the stability of gradient descent flow.  In Bott's original paper, he assumes that the zero locus of $s$ is ``non-degenerate'' in order to apply Morse theory.  Here we prove the \LHT\ without any assumptions on the section.  The key observation is that the gradient flow of a function is stable under much weaker hypotheses than the function being non-degenerate -- as long as a function is real analytic (or close enough) its critical loci will have neighborhood deformation retracts.  These ideas were developed in the work of {\L}ojasiewicz\cite{Lojasiewicz84}, but it's a bit of real-analytic function theory that's been overlooked in the Morse theory literature.  We discuss some of {\L}ojasiewicz's results in Appendix \ref{app_lojasiewicz} and refer to these ideas often throughout the paper.

The technical foundation for Morse theory of DM stacks has been laid by Hepworth in \cite{Hepworth09}.  In that paper he develops Morse functions, Riemannian metrics, integration of vector fields, the strong topology on $C^\infty (\X)$, and the main theorem of Morse theory for the underlying space $\bar{\X}$.\footnote{Although he doesn't prove the main theorem on the level of stacks, which we do in Section \ref{morse_theory_DM_stacks}}  We will freely use these foundational constructions throughout this note without explicitly referencing them.

I'd like to thank my adviser, Constantin Teleman, for suggesting this project to me and for many useful conversations throughout.

\section{Hyperplane theorem for \DM\ stacks} \label{LHT_DM_stacks}

Let $\X$ be an $n$-dimensional Deligne-Mumford stack\footnote{Meaning $\X$ admits a surjective representable \'{e}tale map from a complex manifold $X_0 \to \X$, and $\Delta : \X \to \X \times \X$ is representable and proper. Equivalently $\X$ admits a presentation by a proper \'{e}tale Lie groupoid.} over the site of complex manifolds and $\LL$ a holomorphic line bundle on $\X$ with hermitian structure $\h$ of type $k$ (defined below).  $s \in \Gamma(\X,\LL)$ will be a section of $\LL$, and $\Z \subset \X$ will denote the zero locus of $s$.  We assume furthermore that $\bar{\X}$, the coarse moduli space of $\X$ treated as a topological stack, is compact.

Choose an \'{e}tale atlas $X_0 \to \X$ such that $\LL|_{X_0}$ admits a nowhere vanishing holomorphic section $\sigma$.  Then the $(1,1)$-form
$$\Theta_0 = \delb \del \log \h(\sigma, \sigma) \in \Omega^{1,1} (X_0)$$
descends to a hermitian global section of $\Omega^{1,1}_\X$, the curvature form $\Theta$.  In local analytic coordinates we can write $\Theta$ as $H_{\alpha \beta} dz^\alpha \wedge d\bar{z}^\beta$, and following \cite{Bott59} we call $(\LL,\h)$ \emph{type $k$} if the hermitian matrix $H_{\alpha \beta}$ is positive on a space of (complex) dimension $k$ at every point.

\begin{rem} \label{artin_type_k}
This notion, although not mainstream, is the right one from the perspective of descent.  $\Omega^{p,q}$ is a sheaf on the Artin site of complex manifolds, so the construction of $\Theta$ above works when $X_0 \to \X$ is a surjective submersion and not necessarily \'{e}tale.  In addition, the property of being positive definite on a $k$-dimensional subspace is Artin-local, i.e. if $\varphi : X \to Y$ is a surjective submersion, then a hermitian form on $Y$ is type $k$ iff its pullback to $X$ is type $k$.  Thus we can define a type $k$ holomorphic line bundle on an \emph{Artin} stack.
\end{rem}

Line bundles of type $k$ arise naturally from maps to $\CP^N$.  Let $\varphi : \X \to \CP^N$ be a holomorphic map and $a : X_0 \to \X$ an atlas, either \'{e}tale or submersive.  Define the minimal rank of $\phi$ to be the minimum over all $x \in X_0$ of the complex rank of $D_{x} (\phi \circ a) : T_x X_0 \to T_{\phi\circ a (x)} \CP^N$.  The minimal rank does not change if we pre-compose $a$ with another surjective submersion $X_0^\prime \to X_0$, and thus the minimal rank of $\varphi$ does not depend on the choice of atlas.

Choosing a hermitian form on $\CC^{n+1}$ induces a positive hermitian structure $\h$ on $\OO_{\CP^N}(1)$.  If $\phi : \X \to \CP^N$ has minimal rank $k$, then $\phi^\ast \OO(1)$ is type $k$ on $\X$, and a hyperplane $H \subset \CP^N$ gives a section of $\phi^\ast \OO(1)$.  In this case the \LHT\ says that the relative homotopy groups $\pi_i (\X, \varphi^{-1} (H))$, interpreted as the relative homotopy groups of the classifying spaces, vanish for $i<k$.  Thus one can think of the generalization of the \LHT\ to type $k$ line bundles as a ``large fiber'' generalization as discussed in \cite{GoreskyMacPherson88}.

\vskip 10 pt

Now we will prove the \LHT\ by analyzing the \lq{}\lq{}Morse\rq{}\rq{} function $f = \h(s,s) \in C^\infty_\RR (\X)$.  Define the closed topological substack $\X^{\epsilon]} = f^{-1} [0,\epsilon]$ and the open differentiable substack $\X^{ \epsilon)} = f^{-1} [0,\epsilon)$.  Note that $\X^{0]} = f^{-1} (\{0\}) = \Z$ is precisely the vanishing locus of $s$.

Fixing a Riemannian metric on $\X$, we first show that $f$ satisfies a {\L}ojasiewicz inequality near its global minimum.  This property of $f$ guarantees that the gradient descent flow of $f$ retracts $\X^{\epsilon]}$ onto $\Z$ for sufficiently small $\epsilon$, but we will not make this precise until Section \ref{morse_theory_DM_stacks}.
\begin{claim}
For a sufficiently small $\epsilon >0$, there are constants $C>0$ and $\rho \in (0,1)$ such that
\begin{equation*} \tag{L} \label{eqn_lojasiewicz}
|f|^\rho \leq C |\nabla f|
\end{equation*}
on the substack $\X^{\epsilon]}$.
\end{claim}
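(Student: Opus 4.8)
The plan is to deduce the inequality from the classical {\L}ojasiewicz gradient inequality for real analytic functions, applied not to $f$ directly on $\X$ but to a local analytic model on a chart of the \'{e}tale atlas. First I would observe that the statement is \'{e}tale-local on $\X$ in the following sense: if $U \to \X$ is an \'{e}tale chart from the atlas $X_0$, then $\nabla f$ pulls back to $\nabla (f|_U)$ (the metric having been pulled back), and since $\bar{\X}$ is compact one can cover a neighborhood of $\Z$ by finitely many such charts; a single pair $(C,\rho)$ then works on all of them, hence on $\X^{\epsilon]}$ for $\epsilon$ small. So it suffices to prove the inequality for $f|_U = \h(s,s)$ near the zero set of $s|_U$ inside a single complex manifold chart $U$.

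Next, on such a chart $\LL|_U$ admits a nowhere-vanishing holomorphic frame $\sigma$ (shrinking $U$ if necessary), so we may write $s|_U = g \cdot \sigma$ for a holomorphic function $g$ on $U$, and then $f|_U = |g|^2 \, \h(\sigma,\sigma) = |g|^2 \, w$ where $w$ is a smooth \emph{positive} function. The point is that $|g|^2$ is real analytic — indeed, writing $g$ in terms of real coordinates, both $\mathrm{Re}\, g$ and $\mathrm{Im}\, g$ are real analytic, so $|g|^2 = (\mathrm{Re}\, g)^2 + (\mathrm{Im}\, g)^2$ is real analytic — and its zero set coincides with $\Z \cap U$. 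The classical {\L}ojasiewicz gradient inequality (which I would quote from Appendix \ref{app_lojasiewicz}) applied to the real analytic function $|g|^2$ on a relatively compact neighborhood of a point of $\Z \cap U$ gives constants $C_1 > 0$ and $\rho \in (0,1)$ with $\big| |g|^2 \big|^\rho \leq C_1 \, |\nabla |g|^2|$ there.

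It then remains to transfer this from $|g|^2$ to $f|_U = w \cdot |g|^2$. Since $w$ is smooth, positive, and bounded above and below on the relevant compact set, we have $|f|^\rho = w^\rho \, (|g|^2)^\rho \leq (\sup w^\rho)\, C_1 \, |\nabla |g|^2|$, and by the product rule $\nabla f = w \nabla |g|^2 + |g|^2 \nabla w$, so $|\nabla |g|^2| \leq \tfrac{1}{\inf w}\big( |\nabla f| + |g|^2 |\nabla w|\big)$; absorbing the term $|g|^2 |\nabla w|$ requires a small additional argument, e.g. bounding $|g|^2 \leq (\text{const})\, \big||g|^2\big|^\rho \leq (\text{const}) \, |\nabla |g|^2|$ near the zero set (using $\rho < 1$ and boundedness of $|g|^2$) and reabsorbing, which lets one conclude $|\nabla |g|^2| \leq C_2 \, |\nabla f|$ after shrinking the neighborhood, hence $|f|^\rho \leq C \, |\nabla f|$ with $C = C_1 C_2 \sup w^\rho$. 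Finitely many such neighborhoods cover $\Z$ by compactness of $\bar{\X}$, and taking the worst constants and $\rho$ closest to $1$ among them (then shrinking $\epsilon$ so that $\X^{\epsilon]}$ lies inside the union) completes the proof.

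The main obstacle I anticipate is the bookkeeping in the last step — making sure the {\L}ojasiewicz exponent and constant can genuinely be chosen \emph{uniformly} over the finitely many charts and over the compact zero locus, and that the comparison between $\nabla f$ and $\nabla |g|^2$ does not degenerate near $\Z$ (where both gradients vanish). The conceptual heart, namely that $f$ is real analytic in suitable charts because the section $s$ is holomorphic, is straightforward; the care is entirely in the uniformity and in the reabsorption estimate.
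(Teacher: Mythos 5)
Your proposal is correct and follows essentially the same route as the paper: reduce to a neighborhood of each $p \in \bar{\Z}$ by compactness, trivialize $\LL$ on an \'{e}tale chart to write $f$ locally as a positive smooth function times the real analytic function $|s|^2$, and invoke the {\L}ojasiewicz inequality. The only difference is that your reabsorption estimate for the smooth factor $w$ is already packaged in the paper as Theorem \ref{app_stab_class} (via property (1) of the appendix), so the paper simply cites that result where you re-derive it by hand.
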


\begin{proof}
Note that it suffices to verify this inequality on $\bar{\X}^{\epsilon]}$.  By compactness of $\bar{\Z}$ it suffices to verify \eqref{eqn_lojasiewicz} around each $p \in \bar{\Z}$.  Choose an \'{e}tale analytic coordinate patch $U \to \X$ containing $p\in \bar{\Z}$ such that $\LL|_U$ is trivial.  Then $f_U = \h (s,s) = h \cdot \abs{s}^2$ is the product of a nonvanishing smooth function $h$ with the real analytic function $|s|^2$, and thus by Theorem \ref{app_stab_class} below the inequality \eqref{eqn_lojasiewicz} holds in a neighborhood of $p$.
\end{proof}

Next we will perturb $f$ to be Morse without affecting its values on $\X^{\epsilon/2]}$, so that our new function will continue to satisfy \eqref{eqn_lojasiewicz}.  Let $g : \X \to [0,1]$ be a smooth function such that $g |_{\X^{\epsilon/2]}} = 0$ and $g |_{\X - \X^{\epsilon)}} = 1$.

\begin{claim} \label{claim_perturbation}
There is a small open neighborhood $U \subset C^\infty (\X)$ containing $0$ such that for any $\eta \in U$, the new function $f^\prime = f + g \eta$ has the properties
\begin{itemize}
\item $f^\prime$ has no critical points on $\X^{\epsilon]} - \X^{\epsilon/2 )}$
\item $f^\prime$ is positive on $\X-\X^{\epsilon/2)}$
\item the hermitian form $\delb \del \log f^\prime$ is positive on a space of dimension $k$ at every point in $\X-\X^{\epsilon/2)}$.
\end{itemize}
\end{claim}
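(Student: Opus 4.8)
The plan is to verify that $f$ itself (the case $\eta = 0$) already satisfies all three bullet points, to note that each is an open condition for the $C^2$-topology on an appropriate compact substack, and then to conclude: since $g$ is a fixed smooth function and $\bar\X$ is compact, $\eta \mapsto f + g\eta$ is continuous for the strong topology on $C^\infty(\X)$ (which on a compact stack is just uniform $C^\infty$-convergence), and so are its composites with $\nabla(\cdot)$ and, on the locus where the argument is positive, with $\delb\del\log(\cdot)$. The set $U$ will then be obtained as a finite intersection of preimages of open sets.

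First I would record that $f$ has the three properties. Set $K_1 := \X^{\epsilon]} - \X^{\epsilon/2)} = f^{-1}([\epsilon/2,\epsilon])$ and $K_2 := \X - \X^{\epsilon/2)}$; both are closed in the compact $\bar\X$ and hence compact. On $K_1$ we have $f \geq \epsilon/2 > 0$, so the {\L}ojasiewicz inequality \eqref{eqn_lojasiewicz} gives $|\nabla f| \geq (\epsilon/2)^\rho / C > 0$, so in particular $f$ has no critical points on $K_1$. On $K_2$ we have $f \geq \epsilon/2 > 0$, which is the second property and also shows $K_2 \cap \Z = \emptyset$. For the third property, choose an \'{e}tale patch $U \to \X$ on which $\LL$ is trivialized by a nowhere-vanishing holomorphic section $\sigma$, write $s = \psi \sigma$ with $\psi$ holomorphic, and compute $f = |\psi|^2\, \h(\sigma,\sigma)$, hence $\log f = \log|\psi|^2 + \log\h(\sigma,\sigma)$ on $U \setminus \Z$. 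Since $\log|\psi|^2$ is pluriharmonic off the zeros of $\psi$, $\delb\del\log f = \delb\del\log\h(\sigma,\sigma) = \Theta_0$ there, and these local identities glue to $\delb\del\log f = \Theta$ on $\X \setminus \Z \supseteq K_2$, which is type $k$ by hypothesis.

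Next I would make stability precise, shrinking $U$ in three stages. Taking $\eta$ small enough that $|g\eta| < \epsilon/4$ everywhere (using $0 \leq g \leq 1$) gives $f' \geq \epsilon/4 > 0$ on $K_2$, which is the second bullet and ensures $\log f'$ is defined and smooth near $K_2$. Next, taking $\eta$ small in $C^1$ makes $\nabla(g\eta) = (\nabla g)\eta + g\nabla\eta$ uniformly small on the compact $K_1$ (as $g$ and $\nabla g$ are bounded), so $|\nabla f'| \geq (\epsilon/2)^\rho/C - \|\nabla(g\eta)\|_{C^0} > 0$ on $K_1$: no critical points. Finally, $f'/f \to 1$ in $C^2$ on $K_2$ as $\eta \to 0$ (since $f \geq \epsilon/2$ there), so $\delb\del\log f' = \Theta + \delb\del\log(f'/f) \to \Theta$ uniformly on $K_2$; since the set of hermitian forms positive definite on a fixed $k$-plane is open, covering $K_2$ by finitely many patches on which $\Theta$ is positive definite on some fixed $k$-plane shows that being type $k$ along all of $K_2$ is stable under $C^0$-small perturbations of $\delb\del\log(\cdot)$. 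After the last shrinking, $\delb\del\log f'$ is type $k$ on $K_2$, and the intersection of the three neighborhoods is the desired $U$.

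The one step requiring genuine care — everything else being soft analysis on the compact $\bar\X$ — is the identity $\delb\del\log f = \Theta$ on $\X \setminus \Z$: one must check that the $\partial\bar\partial$-exact ambiguity from changing the local trivializing section is killed, which is exactly the pluriharmonicity of $\log|\psi|^2$, and that the computation is compatible with the \'{e}tale changes of atlas used to define $\Theta$ as the descent of $\Theta_0$. I would also remark why the perturbation does not spoil the {\L}ojasiewicz inequality on $\X^{\epsilon/2]}$ that is needed later in Section \ref{morse_theory_DM_stacks}: $g$ vanishes on $\X^{\epsilon/2]}$, so $f' = f$ there — which is precisely the reason for introducing the cutoff $g$.
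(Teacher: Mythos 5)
Your proof is correct and follows essentially the same route as the paper's: both reduce the claim to the observation that each of the three properties is an open condition on the derivatives of the function up to order $2$ over a suitable compact set, together with the fact that $f$ itself satisfies them. You simply supply the details the paper leaves implicit --- the {\L}ojasiewicz lower bound $|\nabla f| \geq (\epsilon/2)^\rho/C$ on $f^{-1}([\epsilon/2,\epsilon])$ and Bott's identity $\delb\del\log f = \Theta$ on $\X - \Z$ --- which is exactly what is needed.
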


\begin{proof}
Multiplication by $g$ induces a continuous map $C^\infty \X \to C^\infty \X$, so it suffices to prove that the set of $\eta^\prime$ for which $f+\eta^\prime$ satisfies the above properties is open in $C^\infty \X$ and contains $0$.

Now choose a map $V \to \X$ and an open subset $X_0 \subset V$ such that $X_0 \to \X$ is surjective \'{e}tale and $\op{cl}(X_0) \to \X$ is proper.  Then $C^\infty(\X)$ is identified as a vector space with invariant functions in $C^\infty(X_0)$, and one can show that for such an atlas the strong topology on $C^\infty(\X)$ is the subspace topology in $C^\infty(X_0)$.  All three properties are open conditions on the derivatives of $(f+\eta)|_{X_0}$ up to order $2$, and they are satisfied by $f$, so the claim follows.

%
\end{proof}

Morse functions form a dense open subset of $C^\infty (\X)$, so there is some $\eta \in U$, where $U \subset C^\infty \X$ is defined in Claim \ref{claim_perturbation}, such that $f + \eta$ is Morse.  Consider the new function $f^\prime = f + g\eta$.  $f^\prime$ will satisfy all of the properties of Claim \ref{claim_perturbation}, and in addition it will be Morse on the substack $\X - \Z$, because $f^\prime$ contains no critical points in $\X^{\epsilon]}-\Z$ and $f^\prime$ agrees with $f+\eta$ on $\X-\X^{\epsilon)}$.  From this point on we replace our function $f$ with the new function $f^\prime$.

Next we consider the index of the critical points of $f$ away from $\Z$.  Recall that for a critical point $c$, the tangent space $T_c \X$ and the Hessian $H_c f$ are defined in the groupoid $U \times_{\X} U \rightrightarrows U$, where $U \to \X$ is an analytic coordinate patch containing $c$.  The tangent space splits (non-canonically) as a representation of $\Aut_c$, $T_c \X \simeq T_c \X_{-} \oplus T_c \X_{+}$ on which $H_c f$ is negative definite and positive definite respectively.  The index, $\ind_c$ is the isomorphism class of the representation $T_c \X_{-}$ of $\Aut_c$.

Of course in a local coordinate patch around $c$, the classical observation that the Hessian is negative definite on a space of dimension $k$ still applies.
\begin{claim} \cite{Bott59}
At every critical point $c \in \bar{X} - \bar{Z}$, the index representation has $\dim (\ind_c) \geq k$.
\end{claim}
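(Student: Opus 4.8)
The plan is to carry out Bott's local Hessian computation in an \'{e}tale chart about $c$, and to observe that it automatically respects the $\Aut_c$-action. Fix an analytic coordinate patch $U \to \X$ containing $c$, so that $T_c\X$ and the Hessian $B := H_c f$ are defined in the groupoid $U \times_\X U \rightrightarrows U$ as above. Since $f$ descends from $\X$ and we may take the Riemannian metric to be invariant under this groupoid, $B$ is an $\Aut_c$-invariant real symmetric form on $T_c\X$, so its negative-definite subspace $T_c\X_{-}$ is a genuine $\Aut_c$-subrepresentation; it therefore suffices to prove the numerical inequality $\dim_\RR T_c\X_{-} \geq k$, with no further attention to equivariance.

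First I would pass from $f$ to $\log f$. Every critical point of $f$ lying in $\bar\X - \bar\Z$ in fact lies in $\X - \X^{\epsilon/2)}$: by the first property in Claim~\ref{claim_perturbation} it is not in $\X^{\epsilon]} - \X^{\epsilon/2)}$, while on $\X^{\epsilon/2]}$ one has $f = \h(s,s)$, so \eqref{eqn_lojasiewicz} forbids critical points there outside $\Z$; since $\Z \subseteq \X^{\epsilon/2)}$, such a critical point avoids $\X^{\epsilon]}$ entirely. Hence $f(c) > 0$ by the second property, so $H_c(\log f) = f(c)^{-1} B$ has the same negative-definite subspaces as $B$, and it suffices to bound these for $B' := H_c(\log f)$. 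Now invoke the standard fact that, at a critical point, averaging the real Hessian with its transform under the complex structure $J$ kills the $(2,0)+(0,2)$ part and leaves the Levi form: in local coordinates $z^\alpha = x^\alpha + \ii y^\alpha$, for a real tangent vector $v = \sum_\alpha(\zeta^\alpha \del_\alpha + \bar\zeta^\alpha \del_{\bar\alpha})$ one has $B'(v,v) = 2\operatorname{Re}\bigl(\sum_{\alpha,\beta}(\del_\alpha\del_\beta\log f)\zeta^\alpha\zeta^\beta\bigr) + 2\sum_{\alpha,\beta}(\del_\alpha\del_{\bar\beta}\log f)\zeta^\alpha\bar\zeta^\beta$, and since $v \mapsto Jv$ replaces $\zeta$ by $\ii\zeta$ this reverses the first term and fixes the second, giving $B'(v,v) + B'(Jv,Jv) = 4\sum_{\alpha,\beta}(\del_\alpha\del_{\bar\beta}\log f)\zeta^\alpha\bar\zeta^\beta$. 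Unwinding the convention $\delb\del = -\del\delb$ in the definition of $\Theta$, the third property in Claim~\ref{claim_perturbation} — that $\delb\del\log f$ is positive on a complex $k$-dimensional subspace at $c$ — says exactly that there is a $J$-invariant real subspace $W \subseteq T_c\X$ of dimension $2k$ on which $B'(v,v) + B'(Jv,Jv) < 0$ for all $v \neq 0$.

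The argument then finishes with an elementary linear-algebra lemma: if a real symmetric form $C$ on a complex vector space $(V',J)$ satisfies $C(v,v) + C(Jv,Jv) < 0$ for every $v \neq 0$, then $C$ is negative definite on some real subspace of dimension $\geq \tfrac12\dim_\RR V'$. Indeed, were $P \subseteq V'$ a subspace with $C|_P \geq 0$ and $\dim_\RR P > \tfrac12\dim_\RR V'$, then $\dim_\RR P + \dim_\RR(JP) > \dim_\RR V'$, so some nonzero $v$ lies in $P \cap JP$; writing $v = Jw$ with $w \in P$ gives $Jv = -w \in P$ as well, whence $C(v,v) \geq 0$ and $C(Jv,Jv) \geq 0$ contradict the hypothesis. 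So every subspace on which $C$ is nonnegative has dimension $\leq \tfrac12\dim_\RR V'$, which is equivalent to the stated bound. Applying this with $V' = W$ of real dimension $2k$ and $C = B'|_W$ (legitimate since $W$ is $J$-invariant) shows $B'$, hence $B$, is negative definite on a real subspace of dimension $\geq k$; that is, $\dim(\ind_c) = \dim_\RR T_c\X_{-} \geq k$.

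I expect the one mildly delicate point to be bookkeeping rather than substance: keeping the curvature sign convention straight, so that ``$\delb\del\log f$ positive on a $k$-plane'' bounds the index — the dimension of the negative-definite subspace — below, rather than the coindex; and checking that the local identities above are independent of the coordinate patch and of the holomorphic trivialization of $\LL$ chosen near $c$ (the latter because $\delb\del\log|g|^2 = 0$ for nowhere-vanishing holomorphic $g$, which is also what allows one to trivialize by $s$ itself near $c$ and recover $\delb\del\log f = \Theta$ for the unperturbed function). The stacky content here is nil: everything takes place on the \'{e}tale chart $U$ and its groupoid, and the $\Aut_c$-equivariance of $T_c\X_{-}$ is automatic from the $\Aut_c$-invariance of $f$ and of the metric.
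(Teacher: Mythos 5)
Your argument is correct and is exactly the route the paper takes: the paper offers no proof of its own beyond remarking that ``the classical observation'' of \cite{Bott59} applies in a local coordinate patch around $c$, and your write-up is precisely that classical computation (pass to $\log f$ at a critical point where $f>0$, average the real Hessian with its $J$-transform to isolate the Levi form, and use the $P\cap JP\neq 0$ linear-algebra lemma to convert positivity of the curvature on a complex $k$-plane into a real $k$-dimensional negative-definite subspace), together with the correct observation that the only genuinely new inputs are the localization of the critical points to $\X-\X^{\epsilon/2)}$ via Claim~\ref{claim_perturbation} and the automatic $\Aut_c$-equivariance. No gaps; the sign-convention caveat you flag is indeed the only bookkeeping to watch.
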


Thus we have a function $f : \X \to \RR$ that is Morse on the open substack $\X - \Z$ and such that $\Z \subset \X$ is a \emph{stable} global minimum of $f$.  Now from the results of Morse theory in Section \ref{morse_theory_DM_stacks}, we have the main theorem:

\begin{thm} \label{thm_main_stacks}
As a topological stack, $\X$ can be obtained from $\Z$ by a finite sequence of deformation retracts and gluings of $[D^r / G]$ along $[S^{r-1}/G]$, where $r \geq k$.  Here we can assume for each cell attachment that $G = \Aut_c$ acting linearly on the unit disc in $T_c \X_{-}$ for some $c \in \bar{\X} - \bar{\Z}$.
\end{thm}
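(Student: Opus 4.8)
The plan is to follow Bott's strategy \cite{Bott59} of sweeping through the sublevel substacks $\X^{a]} = f^{-1}[0,a]$ using the Morse theory of \DM\ stacks developed in Section \ref{morse_theory_DM_stacks}, while treating the global minimum $\Z$ separately by means of the {\L}ojasiewicz inequality \eqref{eqn_lojasiewicz}. Recall that after the perturbation the function (still denoted $f$) is Morse on the open substack $\X - \Z$, that it has no critical points on $\X^{\epsilon]} - \X^{\epsilon/2)}$ (Claim \ref{claim_perturbation}), and that \eqref{eqn_lojasiewicz} continues to hold on $\X^{\epsilon/2]}$; since $\nabla f$ cannot vanish where $f > 0$ on the region where \eqref{eqn_lojasiewicz} applies, every critical point of $f$ lying outside $\Z$ has value strictly greater than $\epsilon$. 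As $\bar\X$ is compact, $f$ attains a maximum $M$ and has only finitely many critical points outside $\Z$, all nondegenerate, occurring at finitely many critical values $\epsilon < c_1 < \cdots < c_m$. Since $\X = \X^{M]}$, it suffices to build $\X^{M]}$ out of $\Z$.

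The first step is the bottom piece: the closed substack $\X^{\epsilon]}$ deformation retracts onto $\Z$. This is exactly where \eqref{eqn_lojasiewicz} is used. Integrating the normalized downward gradient field of $f$ on the stack $\X^{\epsilon]}$, the {\L}ojasiewicz inequality forces each trajectory to have finite length, hence to converge; the limit lies on the critical locus, which inside $\X^{\epsilon]}$ is precisely $\Z = f^{-1}(0)$; and one checks that assigning to a point the endpoint of its trajectory is continuous and defines a deformation retraction. Crucially this uses no nondegeneracy or genericity hypothesis on the section $s$ — only the real analyticity of $|s|^2$ that produced \eqref{eqn_lojasiewicz}. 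The precise construction (integration of vector fields on \DM\ stacks, existence of limits, continuity of the resulting retraction) is carried out in Section \ref{morse_theory_DM_stacks}.

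The remaining steps move up from level $\epsilon$ to level $M$ via the two standard lemmas of stacky Morse theory. First, if $[a,b] \subset (\epsilon, M]$ contains no critical value, the downward gradient flow exhibits $\X^{a]}$ as a deformation retract of $\X^{b]}$. Second, if $[a,b]$ contains exactly one critical value $c_j$, then for each (nondegenerate) critical point $c \in \bar\X - \bar\Z$ at that level one works in an \'{e}tale chart $U \to \X$ around $c$, puts $f$ into $\Aut_c$-equivariant Morse normal form, writes $T_c\X \simeq T_c\X_{-} \oplus T_c\X_{+}$ as in the discussion preceding the theorem, and uses the downward flow of the negative directions to produce the attaching map; one concludes that $\X^{c_j + \delta]}$ is obtained from $\X^{c_j - \delta]}$ by gluing $[D^r / G]$ along $[S^{r-1}/G]$, once for each such $c$ (these lie in disjoint charts, so the attachments can be performed in sequence), with $G = \Aut_c$ acting linearly on the unit disc in $T_c\X_{-}$. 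Here $r = \dim T_c\X_{-} = \dim(\ind_c) \geq k$ by Bott's claim. Concatenating the bottom retraction, the intermediate deformation retracts, and the finitely many cell attachments yields $\X = \X^{M]}$ from $\Z$, as claimed.

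I expect the main obstacle to lie not in this assembly but in the second lemma above, namely in making the cell attachment an \emph{honest} colimit. One must show that $[S^{r-1}/G]$ embeds as a substack of $\X^{c_j - \delta]}$, that the $2$-categorical pushout $[D^r/G] \cup_{[S^{r-1}/G]} \X^{c_j - \delta]}$ exists at all — this is the strongest form of gluing discussed in \cite{Noohi05}, and it is unavailable for general stacks — and that this pushout is equivalent to $\X^{c_j + \delta]}$ as a topological stack. This is precisely the point flagged in the introduction as the main technical hurdle, and it is taken up in Section \ref{morse_theory_DM_stacks}; the argument here is just the bookkeeping that glues those local statements into the global conclusion.
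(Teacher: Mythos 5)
Your proposal is correct and follows essentially the same route as the paper: the theorem is assembled from exactly the three ingredients you list (the {\L}ojasiewicz retraction of $\X^{\epsilon]}$ onto $\Z$, the no-critical-value deformation retract, and the 2-categorical cell attachment at each critical level with index $\geq k$), all of which the paper likewise defers to Section \ref{morse_theory_DM_stacks}. You also correctly locate the real technical content in verifying that the attachment is an honest 2-categorical colimit, which is precisely what the paper checks there via the gluing criterion of \cite{Noohi05}.
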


\begin{rem}
We only need $\LL$ to be type $k$ on the complement $\X -\Z$.  Geometrically, if $\LL$ arises from a map $\phi : \X \to \CP^N$, this means we only need $k$ to be the minimal rank of the restriction $\phi^{-1} (\CP^N - H) \to \CP^N - H$.
\end{rem}

We will define more carefully what a deformation retract and a cell attachment are below, but suffice it so say that many corollaries about the various topological invariants follow.

Singular homology and cohomology are defined from the singular chain bi-complex of a simplicial manifold presenting $\X$.  The reader is referred to \cite{Behrend04} and \cite{Behrend03} for an in-depth discussion and the equivalence between this and other notions of cohomology (i.e. de Rham or sheaf cohomology).

One thing that's not described in \cite{Behrend04} is that $H^i$ and $H_i$ are homotopy functors, but this follows from the usual argument, the key fact being the Poincar\'{e} lemma: by a local-to-global spectral sequence the projection $p : \X \times I \to \X$ induces an isomorphism in homology and cohomology.

The homotopy groups are a bit more subtle.  We define $\pi_i(\X) = \pi_i(\real{\X})$ for a classifying space $\real{\X}$.  One way to define the classifying space $\real{\X}$ is as the geometric realization of the simplicial manifold $X_p = X_0 \times_{\X} \cdots \times_{\X} X_0$ defined using an atlas $X_0 \to \X$ -- see \cite{Segal68} for a definition and properties of this construction.

A more recent discussion using the modern language of stacks appears in \cite{Noohi08}.  There the author uses the Haefliger-Milnor construction of a classifying space.  In this note all stacks will admit presentations in which $X_1$ and $X_0$ are both metrizable -- they are hoparacompact in the language of \cite{Noohi08} -- in which case the Segal construction will suffice.

In any event, we will only need a few properties of $\real{\X}$.  First there is an epimorphism $\varphi : \real{\X} \to \X$ which is a universal weak equivalence, meaning that the base change along any map $T \to \X$ is a weak equivalence.  $\varphi$ induces isomorphisms on homology and cohomology, in other words the homology of $\real{\X}$ agrees with the double complex homology defined above.\footnote{If $X_p$ is a simplicial presentation for $\X$ and $X^\prime_p$ the pulled-back presentation of $\real{\X}$, then $X^\prime_p \to X_p$ is a weak equivalence for all $p$, so the spectral sequence $E_1^{p,q} = H^q(X_p) \Rightarrow H^{p+q}(\X)$ implies that $H^\ast ( \X ) \to H^\ast (\real{\X})$ is an isomorphism.}

Also the construction $\real{\X}$ is a \emph{functor} from the category of (hoparacompact) stacks to the homotopy category of paracompact spaces.  Finally, for $\Z \subset \X$ an embedding, we can construct $\real{\Z}$ such that $\real{\Z} \subset \real{\X}$ is a subspace.  Thus we define the relative homotopy groups $\pi_i(\X,\Z) := \pi_i(\real{\X},\real{\Z})$.  We omit base points from our notation, but the following theorem applies for any base point in $\real{\Z}$.

\begin{cor} \label{cor_htpy_groups}
We have $\pi_i(\X,\Z) = 0$ for $i<k$.  Hence from the long exact sequence of a pair $\pi_i(\Z) \to \pi_i(\X)$ is an isomorphism for $i<k-1$ and surjective for $i=k-1$.
\end{cor}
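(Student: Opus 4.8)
The plan is to reduce Corollary~\ref{cor_htpy_groups} to Theorem~\ref{thm_main_stacks} by a standard cellular-approximation argument, carried out on the classifying spaces rather than on the stacks directly. First I would translate the conclusion of Theorem~\ref{thm_main_stacks} into a statement about the pair $(\real{\X}, \real{\Z})$: since $\real{(-)}$ is a functor to the homotopy category of paracompact spaces and sends the embedding $\Z \subset \X$ to a subspace inclusion $\real{\Z} \subset \real{\X}$, each deformation retract in the sequence becomes a homotopy equivalence of pairs after applying $\real{(-)}$, and each gluing of $[D^r/G]$ along $[S^{r-1}/G]$ becomes, up to homotopy, the attachment of the space $\real{[D^r/G]}$ along $\real{[S^{r-1}/G]}$. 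The key point is that $\real{[D^r/G]} \simeq \real{BG}$ since $D^r$ is $G$-equivariantly contractible, while $\real{[S^{r-1}/G]}$ fits into a fibration $S^{r-1} \to \real{[S^{r-1}/G]} \to BG$ (here $G$ acts on $S^{r-1}$ as a finite subgroup of the linear group on $T_c\X_-$). So up to homotopy we are attaching the mapping cone of a map out of an $(r-1)$-connected-relative-to-$BG$ space.

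Next I would verify that each such attachment does not change relative homotopy in degrees below $k$. Concretely, if $W$ is obtained from $W'$ by gluing $\real{[D^r/G]}$ along $\real{[S^{r-1}/G]}$ via an attaching map $\real{[S^{r-1}/G]} \to W'$, I want $\pi_i(W, W') = 0$ for $i < k \le r$. Using excision (the pair $(W,W')$ is equivalent to $(\real{[D^r/G]}, \real{[S^{r-1}/G]})$ via the attaching map, at least after a suitable cofibrancy/CW-replacement, which is legitimate since all spaces in sight are paracompact and the inclusions are cofibrations), it suffices to show $\pi_i(\real{[D^r/G]}, \real{[S^{r-1}/G]}) = 0$ for $i<k$. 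Both spaces map to $BG$ compatibly; on the total spaces the relative homotopy of the pair $(\real{[D^r/G]}, \real{[S^{r-1}/G]})$ can be computed from the relative homotopy of the fibers $(D^r, S^{r-1})$ together with the $BG$-base via the exact sequence of the triple / a Serre spectral sequence argument, giving $\pi_i(D^r, S^{r-1}) = 0$ for $i < r$ and hence the same for the pair over $BG$ in the relevant range. Then one assembles: starting from $\real{\Z}$ and performing finitely many deformation retracts (which change nothing) and attachments (each of which kills relative homotopy below $k$), an induction on the length of the sequence gives $\pi_i(\real{\X}, \real{\Z}) = 0$ for $i < k$, i.e. $\pi_i(\X, \Z) = 0$ for $i < k$.

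Finally, the second sentence of the corollary is purely formal: feed $\pi_i(\X,\Z) = 0$ for $i<k$ into the long exact sequence of the pair $(\real{\X}, \real{\Z})$,
\[
\cdots \to \pi_{i+1}(\X,\Z) \to \pi_i(\Z) \to \pi_i(\X) \to \pi_i(\X,\Z) \to \cdots,
\]
and read off that $\pi_i(\Z) \to \pi_i(\X)$ is an isomorphism when both $\pi_i(\X,\Z)$ and $\pi_{i+1}(\X,\Z)$ vanish, i.e. for $i < k-1$, and is surjective when $\pi_i(\X,\Z) = 0$, i.e. for $i = k-1$.

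I expect the main obstacle to be the handling of the cell attachments at the level of classifying spaces with the correct cofibrancy hypotheses — i.e. justifying that the 2-categorical gluing of stacks in Theorem~\ref{thm_main_stacks} really does geometrically realize to a homotopy pushout of paracompact spaces along a cofibration, so that excision and the long exact sequence of a pair are available. The subtlety is that $\real{(-)}$ is only a functor to the homotopy category, so one must either work with a point-set model (e.g. the Segal realization, using that the relevant stacks are hoparacompact) in which $\real{[S^{r-1}/G]} \hookrightarrow \real{[D^r/G]}$ is a genuine cofibration, or invoke a gluing lemma for homotopy pushouts. Once that compatibility of $\real{(-)}$ with the gluings is pinned down, the connectivity bookkeeping is routine.
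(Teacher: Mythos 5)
Your overall route is the same as the paper's: induct over the sequence of deformation retracts and disc-quotient attachments from Theorem~\ref{thm_main_stacks} using the long exact sequence of a triple, dispose of the retracts trivially, reduce each attachment to the connectivity of the pair $(\real{[D^r/G]},\real{[S^{r-1}/G]})$ via the fibration over $BG$, and finish with the long exact sequence of the pair. The gap is in the step where you assert that, ``using excision,'' the pair $(W,W')$ is equivalent to $(\real{[D^r/G]},\real{[S^{r-1}/G]})$ after CW-replacement. Relative homotopy groups do not satisfy excision, and no cofibrancy hypothesis repairs this: already for $W'=S^1$ with a single $2$-cell attached along a degree-$2$ map one has $\pi_2(W,W')\neq\pi_2(D^2,S^1)$. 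As written, your argument would compute $\pi_i(W,W')$ in \emph{all} degrees, which is false; what is available is only homotopy excision (Blakers--Massey), which gives an isomorphism/epimorphism $\pi_i(\text{pair excised}) \to \pi_i(W,W')$ in a range controlled by the connectivities of the two halves of an \emph{excisive triad}, i.e.\ one whose interiors cover $W$ --- and the interiors of $W'$ and $\real{[D^r/G]}$ do not cover $W$, since the attaching sphere lies in neither.

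Your conclusion $\pi_i(W,W')=0$ for $i<k\le r$ is nevertheless correct, and the paper's proof shows how to repair the step: replace the sphere quotient by the quotient $[A/G]$ of a closed annulus $A\subset D^r$ (the complement of the open half-radius disc), so that the interiors of $\X\cup_{[S/G]}[A/G]$ and $[D/G]$ genuinely cover $\X'$; apply homotopy excision to conclude that $(\X',\X\cup_{[S/G]}[A/G])$ is $(k-1)$-connected from the $(k-1)$-connectivity of $([D/G],[A/G])$ (which you establish correctly via the covering/fibration over $BG$); and then compress $A$ equivariantly onto the boundary sphere, using the universal property of the $2$-categorical pushout, to identify this pair with $(\X',\X)$ up to homotopy equivalence of pairs. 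Your own flagged worry --- whether $\real{(-)}$ turns the $2$-categorical gluing into a homotopy pushout along a cofibration --- is a fair one, but the more immediate defect is the unqualified use of excision for relative homotopy groups.
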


\begin{proof}
For a triple of topological stacks $\Z \subset \X \subset \X^\prime$ we have a long exact sequence
$$\cdots \to \pi_i (\X,\Z) \to \pi_i(\X^\prime, \Z) \to \pi_i(\X^\prime, \X) \to \cdots$$
so if $\pi_i(\X^\prime,\X) = \pi_i(\X,\Z) = 0$, then $\pi_i(\X^\prime,\Z) = 0$ as well.  Thus the corollary follows by induction on the sequence of cell attachments and deformation retracts described in Theorem \ref{thm_main_stacks}, once we verify that $\pi_i(\X^\prime,\X) = 0$ for $i<k$ when $\X \subset \X^\prime$ is a disc attachment or deformation retract.


For a deformation retract, the inclusion $\real{\X} \to \real{\X^\prime}$ is a homotopy equivalence, so $\pi_i(\X^\prime, \X) = 0$ for all $i$.

Next let $\X^\prime = \X \cup_{[S/G]} [D/G]$ be a 2-categorical colimit, where $D$ is a disc of dimension $\geq k$, $S$ its boundary sphere, and $G$ a finite group acting linearly on the pair $(D,S)$.  Let $A \subset D$ be the complement of an open disc of half the radius of $D$, and consider the substack $\X \cup_{[S/G]} [A/G] \subset \X^\prime$.

First observe that $A \to [A/G]$ and $D \to [D/G]$ are regular coverings, so $\pi_i([A/G]) \to \pi_i([D/G])$ is an isomorphism for $i<k-1$ and surjective for $i=k-1$; hence $([D/G],[A/G])$ is $k-1$-connected.  The interiors of $\X \cup_{[S/G]} [A / G]$ and $[D/G]$ cover $\X^\prime$, so excision for homotopy groups implies that $(\X^\prime, \X \cup_{[S/G]} [A/G])$ is $k-1$-connected as well.

Finally, the equivariant map $D \to D$ compressing $A$ onto the boundary sphere $S$ is equivariantly homotopic to the identity through a homotopy fixing $S$.  Thus using the universal property of $\X \cup_{[S/G]} [D/G]$ we have a homotopy equivalence of pairs $(\X^\prime, \X \cup_{[S/G]} [A/G]) \simeq (\X^\prime, \X)$, so we deduce $\pi_i(\X^\prime, \X) = 0$ for $i<k$.
\end{proof}

Recall that $H_\ast(\real{\X},\real{\Z}) = H^\ast(\X,\Z)$ and likewise for $H^\ast$, so

\begin{cor} \label{cor_homological_DM_LHT}
For the relative homology and cohomology, with integer coefficients, we have $H^i(\X,\Z) = H_i(\X,\Z) = 0$ for $i<k$.  As a consequence $H^i(\X) \to H^i(\Z)$ is an isomorphism for $i<k-1$ and injective for $i=k-1$.
\end{cor}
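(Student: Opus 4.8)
The plan is to transfer everything to the classifying spaces and then apply a standard fact about highly connected CW pairs. Recall from the discussion immediately preceding the corollary that $H_i(\X,\Z) = H_i(\real{\X},\real{\Z})$ and $H^i(\X,\Z) = H^i(\real{\X},\real{\Z})$, and that $\real{\X}$, $\real{\Z}$ are paracompact spaces of CW homotopy type. By Corollary \ref{cor_htpy_groups} the pair $(\real{\X},\real{\Z})$ is $(k-1)$-connected, so the usual cell-trading argument for connected CW pairs lets us replace it, up to homotopy equivalence rel $\real{\Z}$, by a pair in which $\real{\X}$ is obtained from $\real{\Z}$ by attaching cells of dimension $\geq k$ only. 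For such a model both the relative cellular chain complex and its dual, the relative cellular cochain complex, vanish in degrees below $k$, hence $H_i(\X,\Z) = H^i(\X,\Z) = 0$ for $i < k$. The last sentence of the corollary then drops out of the long exact cohomology sequence of $(\X,\Z)$: for $i < k-1$ both $H^i(\X,\Z)$ and $H^{i+1}(\X,\Z)$ vanish and $H^i(\X) \to H^i(\Z)$ is an isomorphism, while for $i = k-1$ only $H^{k-1}(\X,\Z) = 0$ is available and yields injectivity.

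An alternative route stays inside the category of stacks and mirrors the proof of Corollary \ref{cor_htpy_groups}, inducting along the building sequence of Theorem \ref{thm_main_stacks}. A deformation retract contributes nothing to $H_\ast(\X^\prime,\X)$, and for a cell attachment $\X^\prime = \X \cup_{[S^{r-1}/G]} [D^r/G]$ excision identifies $H_\ast(\X^\prime,\X)$ with $H_\ast([D^r/G],[S^{r-1}/G])$. After realization the latter is the relative homology of the disc-bundle/sphere-bundle pair of the vector bundle $EG \times_G T_c\X_{-} \to BG$, so by the Thom isomorphism it equals $H_{\ast - r}$ of $BG$ with coefficients in the orientation local system of that bundle (equivalently $\ZZ$ with $\Aut_c$ acting through the determinant of its representation on $T_c\X_{-}$), and similarly for cohomology; in particular it vanishes in degrees below $r$. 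Since $r \geq k$, the long exact sequence of the triple $(\X^\prime,\X,\Z)$ propagates the vanishing of $H_i(\X,\Z)$ and $H^i(\X,\Z)$ for $i < k$ through the whole sequence.

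There is no serious obstacle — this really is a corollary — and the only point deserving care is the comparison between the ``homology of a stack'', defined via the singular chain bicomplex, and the homology of the pair of realizations $(\real{\X},\real{\Z})$; this is precisely the relative version of the spectral-sequence observation recorded before the corollary. In the second approach the analogous thing to pin down is that excision and the Thom isomorphism behave as expected for the quotient stacks $[D^r/G]$ and $[S^{r-1}/G]$, which is routine once one replaces them by the Borel constructions $EG \times_G D^r$ and $EG \times_G S^{r-1}$. I expect this bookkeeping — not any geometric content — to be the only thing that needs attention.
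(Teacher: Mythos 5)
Your first argument is essentially the paper's: the corollary is deduced there from the identification $H_\ast(\X,\Z)=H_\ast(\real{\X},\real{\Z})$ together with the $(k-1)$-connectivity of the pair from Corollary \ref{cor_htpy_groups}, with the Hurewicz-type step left implicit (you make it explicit via CW approximation and cell trading, which is fine). Your second route, inducting along the decomposition of Theorem \ref{thm_main_stacks} and using excision plus the Thom isomorphism for $EG\times_G(D^r,S^{r-1})$, is a correct alternative but not needed.
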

\vskip 10 pt

\subsection{Topology of the underlying space}

For completeness we describe the consequences for the topology of the underlying spaces $\bar{\Z} \subset \bar{\X}$.  In the introduction we mention that the homological \LHT\ holds with rational coefficients for $\bar{\Z} \subset \bar{\X}$, but in fact we only need to invert the orders of the automorphism groups of points of $\X$.

\begin{prop} \label{thm_main_underlying_space}
The underlying space $\bar{\X}$ can be obtained from $\bar{\Z}$, up to homotopy equivalence, by attaching finitely many disc quotients $D^r/G$ along the boundary $S^{r-1}/G$.  Here, as before, $r \geq k$ and $G = \Aut_c$ acting linearly on the unit disc $D^r \subset T_c \X_- \subset T_c \X$.
\end{prop}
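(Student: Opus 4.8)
The plan is to deduce this from Theorem \ref{thm_main_stacks} by passing to coarse moduli spaces, using the fact that for a finite group $G$ acting linearly on a disc $D^r$, the quotient map $[D^r/G] \to D^r/G$ induces an isomorphism on rational homology — and more precisely on homology with $\ZZ[1/|G|]$ coefficients. The key point is that coarse-moduli-space formation is a functor from DM stacks to topological spaces which carries the gluing and deformation-retract operations of Theorem \ref{thm_main_stacks} to the corresponding operations on spaces, provided the gluings are along embedded substacks.

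First I would recall that if $\mathfrak{Y} = \mathfrak{U} \cup_{\mathfrak{W}} \mathfrak{V}$ is a $2$-categorical pushout along an embedding $\mathfrak{W} \hookrightarrow \mathfrak{V}$, then the underlying space satisfies $\bar{\mathfrak{Y}} = \bar{\mathfrak{U}} \cup_{\bar{\mathfrak{W}}} \bar{\mathfrak{V}}$; this is because the coarse space functor is a left adjoint to the inclusion of spaces into topological stacks, hence preserves colimits, and one checks it preserves the relevant monomorphisms so that the pushout of spaces is again a subspace inclusion. Likewise a deformation retract of stacks $\mathfrak{Y} \to \mathfrak{Y}'$ descends to a deformation retract $\bar{\mathfrak{Y}} \to \bar{\mathfrak{Y}'}$, since a homotopy $\mathfrak{Y}' \times I \to \mathfrak{Y}'$ induces $\bar{\mathfrak{Y}'} \times I \to \bar{\mathfrak{Y}'}$ (using $\overline{\mathfrak{Y}' \times I} = \bar{\mathfrak{Y}'} \times I$). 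Applying this to the sequence in Theorem \ref{thm_main_stacks}, the underlying space $\bar{\X}$ is built from $\bar{\Z}$ by a sequence of deformation retracts and attachments of the quotient spaces $D^r/G$ along $S^{r-1}/G$, with $r \geq k$ and $G = \Aut_c$ acting linearly on the unit disc in $T_c\X_-$. Since each $D^r/G$ is contractible — it deformation retracts to the image of the origin via the radial homotopy, which is $G$-equivariant hence descends — each attachment is, up to homotopy equivalence, the attachment of a cone on $S^{r-1}/G$, which is the statement claimed. The assertion about $H^i(\bar\X) \to H^i(\bar\Z)$ being an isomorphism for $i < k-1$ and injective for $i = k-1$ then follows from the long exact sequence of the pair once one knows $H_i(D^r/G, S^{r-1}/G) = 0$ for $i < k$; but $H_i(D^r/G, S^{r-1}/G) \cong \tilde H_{i-1}(S^{r-1}/G)$ up to the usual shift, and $\tilde H_j(S^{r-1}/G; \ZZ[1/|G|]) = 0$ for $j < r-1$ because the transfer for the covering $S^{r-1} \to S^{r-1}/G$ splits the restriction map rationally and $S^{r-1}$ is $(r-2)$-connected. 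This also explains the remark that one need only invert the orders $|\Aut_c|$.

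The main obstacle I expect is the bookkeeping in the first step: verifying that the coarse space functor genuinely commutes with the $2$-categorical colimits appearing in Theorem \ref{thm_main_stacks}, and that the resulting pushout of spaces is a homotopy pushout along a cofibration (so that the operation "attach a space along a subspace" behaves as expected up to homotopy equivalence). Since the gluings in Theorem \ref{thm_main_stacks} are along embedded substacks of the form $[S^{r-1}/G]$ with $G$ finite, and $S^{r-1}/G \hookrightarrow D^r/G$ is a closed inclusion of a CW-pair (after triangulating equivariantly), this is true, but it requires care with the point-set topology of coarse spaces, which is why I would lean on the metrizability hypotheses already in force in the paper. Everything else — contractibility of $D^r/G$, connectivity of $S^{r-1}/G$ with inverted coefficients, the long exact sequence argument — is routine.
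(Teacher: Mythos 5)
Your proof is correct, but it takes a genuinely different route from the paper's. The paper does not pass through Theorem \ref{thm_main_stacks} at all: it simply observes that the neighborhood retraction $\bar{\X}^{\epsilon]} \to \bar{\Z}$ is already available on the coarse space (from the {\L}ojasiewicz stability of the gradient flow), and then cites Hepworth's Morse lemmas 7.5--7.7, which are proved directly for the \emph{underlying space} of a DM stack, together with the index estimate $\dim(\ind_c) \geq k$. In other words, the paper re-runs the Morse-theoretic decomposition at the level of $\bar{\X}$, where the hard 2-categorical gluing issues of Section \ref{morse_theory_DM_stacks} never arise. Your argument instead deduces the coarse statement formally from the stacky decomposition by applying the coarse-space functor, using that it is a left (2-)adjoint to the inclusion of spaces into topological stacks and hence carries the 2-categorical pushouts $\X \cup_{[S^{r-1}/G]}[D^r/G]$ to pushouts $\bar{\X} \cup_{S^{r-1}/G} D^r/G$, and that it respects products so deformation retracts descend (a fact the paper itself records when discussing the definition of deformation retract). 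Both arguments are sound; yours buys a cleaner logical dependence (the coarse statement becomes a corollary of the stacky one, with the only new input being the cofibration/homotopy-pushout bookkeeping you flag), while the paper's is shorter given that Hepworth has already done the underlying-space Morse theory, and it makes the Proposition independent of the more delicate 2-colimit verifications. Your closing remarks about $H_\ast(D^r/G, S^{r-1}/G;\ZZ[1/|G|])$ and the transfer belong to the subsequent Corollary rather than to this Proposition, but they match the paper's reasoning there.
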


Using the Morse function $f$ constructed above, we know that $\bar{\X}^{\epsilon]}$ deformation retracts onto $\bar{\Z}$ for sufficiently small $\epsilon$.  From this point the proposition is a direct application of the Morse lemmas 7.5-7.7 in \cite{Hepworth09}, using the estimate on the index of the critical points of $f$.

\begin{cor}
$H_i(\bar{\X},\bar{\Z}; \ZZ[1/N]) = H^i (\bar{\X},\bar{\Z}; \ZZ[1/N]) = 0$ for $i<k$ as long as $N$ is a common multiple of the orders of the isotropy groups of $\X$ (of which there are finitely many because $\bar{\X}$ is compact).
\end{cor}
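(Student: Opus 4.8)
The plan is to reduce the statement to a cell-by-cell computation, paralleling the proof of Corollary~\ref{cor_htpy_groups} but replacing homotopy groups with singular homology and inserting a transfer argument to absorb the finite quotients. By Proposition~\ref{thm_main_underlying_space}, $\bar{\X}$ is built from $\bar{\Z}$, up to homotopy equivalence rel $\bar{\Z}$, as the top of a finite filtration $\bar{\Z} = \bar{\X}_0 \subset \bar{\X}_1 \subset \cdots \subset \bar{\X}_m \simeq \bar{\X}$ with $\bar{\X}_j = \bar{\X}_{j-1}\cup_{S^{r_j-1}/G_j} D^{r_j}/G_j$, where $r_j \geq k$ and $G_j = \Aut_{c_j}$ acts linearly. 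Since $\bar{\X}$ is compact there are only finitely many isotropy groups, so each $|G_j|$ divides $N$ and is therefore a unit in $R := \ZZ[1/N]$. First I would observe that it is enough to prove $H_i(\bar{\X}_j,\bar{\X}_{j-1};R) = 0$ for $i<k$ and all $j$: the long exact sequences of the triples $\bar{\Z}\subset\bar{\X}_{j-1}\subset\bar{\X}_j$, together with induction on $j$ (base case $\bar{\X}_0=\bar{\Z}$), then force $H_i(\bar{\X},\bar{\Z};R)=0$ for $i<k$, exactly as in Corollary~\ref{cor_htpy_groups}.

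For the local computation I would fix $j$ and abbreviate $D = D^{r_j}$, $S = S^{r_j-1}$, $G = G_j$, $r = r_j$. A $G$-equivariant triangulation of $D$ with $S$ a subcomplex makes $S/G \hookrightarrow D/G$ a CW pair and $\bar{\X}_{j-1}\hookrightarrow \bar{\X}_j$ a cofibration, so excision identifies $H_i(\bar{\X}_j,\bar{\X}_{j-1};R)$ with $H_i(D/G,S/G;R)$. Then I would apply the classical transfer for finite, not necessarily free, group actions: because the averaging element $\tfrac{1}{|G|}\sum_{g\in G} g$ lies in $RG$, the quotient maps $D\to D/G$ and $S\to S/G$ induce isomorphisms $H_*(D;R)^G \cong H_*(D/G;R)$ and $H_*(S;R)^G \cong H_*(S/G;R)$. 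Since the $G$-action on $D$ is linear, $D/G$ is contractible; and $\tilde H_*(S/G;R)\cong \tilde H_*(S;R)^G$ is concentrated in degree $r-1$. The long exact sequence of the pair $(D/G,S/G)$ then gives $H_i(D/G,S/G;R)\cong \tilde H_{i-1}(S/G;R)$, which vanishes unless $i=r$; since $r\geq k$, it vanishes for every $i<k$, as needed.

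For the cohomology I would invoke the universal coefficient theorem: $R=\ZZ[1/N]$ is a localization of $\ZZ$, hence a PID, and the relative singular cochain complex places $H^i(\bar{\X},\bar{\Z};R)$ in a short exact sequence with $\Hom_R\big(H_i(\bar{\X},\bar{\Z};R),R\big)$ and $\op{Ext}^1_R\big(H_{i-1}(\bar{\X},\bar{\Z};R),R\big)$, both of which vanish for $i<k$ by the homology vanishing already proved (note $i-1<k$ as well). Alternatively one can rerun the transfer argument directly in cohomology, using $H^*(S/G;R)\cong H^*(S;R)^G$.

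The one step demanding genuine care is the transfer isomorphism $H_*(Y/G;R)\cong H_*(Y;R)^G$ for the non-free linear actions that occur here; the rest is the standard bookkeeping of attaching cells and chasing long exact sequences. I would either cite the standard treatment of transfers for $G$-CW complexes (for instance Bredon's \emph{Introduction to Compact Transformation Groups}) or spell out the chain-level argument: a linear $G$-sphere carries a $G$-CW structure, so $C_*^{\mathrm{CW}}(S/G;R)\cong C_*^{\mathrm{CW}}(S;R)\otimes_{RG} R$, and the functor $(-)\otimes_{RG} R$ is exact once $|G|\in R^\times$, whence $H_*(S/G;R)\cong H_*(S;R)\otimes_{RG} R\cong H_*(S;R)^G$.
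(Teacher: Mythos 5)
Your proposal is correct and follows essentially the same route as the paper: the paper's proof reduces to the vanishing of $H_i(D/G,S/G;\ZZ[1/N])$ for $i<k$ via the cell decomposition of Proposition \ref{thm_main_underlying_space}, citing this as ``a fact about group homology,'' which is precisely the transfer argument you spell out. Your write-up simply supplies the details (excision, the transfer isomorphism for linear $G$-CW actions, and universal coefficients for the cohomological statement) that the paper leaves implicit.
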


\begin{proof}
This follows from the fact that $H_i(D/G,S/G ; \ZZ[1/N]) = 0$ for $i<k$, which is really fact about group homology.  One could also deduce the claim directly from Corollary \ref{cor_homological_DM_LHT} and the isomorphism $H_\ast (\X ; \ZZ[1/N]) \to H_\ast (\bar{\X} ; \ZZ[1/N])$.
\end{proof}

\section{Morse theory of Deligne-Mumford stacks} \label{morse_theory_DM_stacks}

Now we will prove the main theorem of the Morse theory for \DM\ stacks used in the proof of Theorem \ref{thm_main_stacks}.  For this section we will take as input a function $f : \X \to \RR$ with the following properties:
\begin{itemize}
\item $f$ is nonnegative with a global minimum $\Z = f^{-1} \{0\} \subset \X$ along which $f$ satisfies a {\L}ojasiewicz inequality \eqref{eqn_lojasiewicz}.
\item $\bar{f} : \bar{\X} \to \RR$ is proper
\item $f$ is Morse on the complement $\X - \Z$, with critical points $c_1,c_2,\ldots,c_r$.
\end{itemize}
As with manifolds, the critical point data of $f$ determines the ``strong homotopy type'' of $\X$.

We fix a Riemannian metric $g$ on $\X$, and we will study the flow along the vector field $-\nabla f$.\footnote{On a \DM\ stack a vector field corresponds to a consistent choice of vector field $\xi_U$ on $U$ for all \'{e}tale $U \to \X$.  The tangent bundle of an Artin stack is more complicated \cite{Hepworth09b}}
\begin{fact}
Given a vector field $\xi$ on $\X$ with compact support, there is a representable morphism $\Phi : \X \times \RR \to \X$ such that for any \'{e}tale map $M \to \X$, the base change
$$\xymatrix{M^\prime \ar[d]_{\Phi^\prime} \ar[r] & \X \times \RR \ar[d]_{\Phi} \\ M \ar[r] & \X }$$
satisfies $D \Phi^\prime (\frac{\partial}{\partial t})_{M^\prime} = \xi _M$.  Furthermore there is an isomorphism $e_{\Phi} : \Phi|_{\X \times \{0\}} \Rightarrow \id_{\X}$, and the flow morphism $\Phi$ is uniquely determined by $e_{\Phi}$.
\end{fact}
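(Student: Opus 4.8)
The plan is to reduce everything to the classical theory of flows on manifolds by passing to an atlas and then descending. Fix a proper \'{e}tale Lie groupoid $X_1 \rightrightarrows X_0$ presenting $\X$, with source and target maps $s, t : X_1 \to X_0$, unit $e$, multiplication $m$, and inverse $i$. Since $s$ and $t$ are \'{e}tale, $\xi$ determines a unique invariant vector field $\xi_0$ on $X_0$, and $\xi_0$ lifts (via $s$, equivalently via $t$) to a vector field $\xi_1$ on $X_1$ which, by uniqueness of \'{e}tale lifts, is intertwined with $\xi_0$ by all of the structure maps. Because $\supp \xi$ is compact, $\xi_0$ vanishes outside a relatively compact open subset of $X_0$, so after choosing the atlas appropriately --- or directly invoking Hepworth's integration of vector fields in \cite{Hepworth09} --- we may assume $\xi_0$ and $\xi_1$ are complete, with flows $\phi_0 : X_0 \times \RR \to X_0$ and $\phi_1 : X_1 \times \RR \to X_1$.

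The first real step is to observe that $(\phi_0, \phi_1)$ is a strict morphism of Lie groupoids from $X_1 \times \RR \rightrightarrows X_0 \times \RR$, a presentation of $\X \times \RR$, to $X_1 \rightrightarrows X_0$: this is just naturality of flows, since $\xi_1$ is $s$- and $t$-related to $\xi_0$ and intertwined by $m$, $e$, $i$, whence $s \circ \phi_1^\tau = \phi_0^\tau \circ s$ and similarly for the other structure maps. Passing to associated stacks yields $\Phi : \X \times \RR \to \X$, and the strict identity $\phi_0^0 = \id_{X_0}$ descends to the isomorphism $e_\Phi : \Phi|_{\X \times \{0\}} \Rightarrow \id_\X$. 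The equation $D\Phi^\prime(\partial/\partial t)_{M^\prime} = \xi_M$ holds on the atlas by construction of $\phi_0$; for an arbitrary \'{e}tale $M \to \X$ it follows after refining by the atlas, since a tangent vector on $M^\prime$ is detected on any further \'{e}tale cover.

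For representability I would consider $\widetilde{\Phi} = (\Phi, \pr_\RR) : \X \times \RR \to \X \times \RR$. The flow law $\phi_0^{\sigma + \tau} = \phi_0^\sigma \circ \phi_0^\tau$, valid on the atlas and hence up to a canonical $2$-isomorphism on $\X$ by uniqueness of integral curves, shows that $(y, \tau) \mapsto (\Phi(y, -\tau), \tau)$ is a two-sided inverse of $\widetilde{\Phi}$; thus $\widetilde{\Phi}$ is an equivalence, in particular representable. Since $\pr_\X : \X \times \RR \to \X$ is the base change of $\RR \to \pt$ and so representable, the composite $\Phi = \pr_\X \circ \widetilde{\Phi}$ is representable. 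For uniqueness, given a second $\Phi^\prime$ with the same ODE property and an isomorphism $\Phi^\prime|_{\X \times \{0\}} \Rightarrow \id_\X$, pulling back along $X_0 \to \X$ exhibits $\Phi$ and $\Phi^\prime$ as integral curves of $\xi_0$ agreeing at $\tau = 0$; uniqueness of solutions to ODEs forces them to coincide, and these identifications glue by descent to a unique $2$-isomorphism $\Phi \Rightarrow \Phi^\prime$ compatible with the initial-condition isomorphisms.

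I expect the main obstacle to be the $2$-categorical bookkeeping in the two middle steps: upgrading ``uniqueness of integral curves on the atlas'' to canonical, coherent $2$-isomorphisms on $\X$, and checking that the groupoid-level flow genuinely descends together with all of its structure. The only other subtle point is the place where compact support of $\xi$, together with properness of the presentation, is used, namely to guarantee completeness of $\xi_0$ on the (possibly noncompact) atlas $X_0$.
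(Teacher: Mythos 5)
The paper does not actually prove this statement: it is recorded as a ``Fact'' and imported from Hepworth's foundational work \cite{Hepworth09} on integration of vector fields on Deligne--Mumford stacks, so there is no in-paper argument to compare against. Your overall strategy --- pass to a groupoid presentation $X_1 \rightrightarrows X_0$, integrate the induced invariant vector fields $\xi_0$ and $\xi_1$, observe that the flows form a strict groupoid morphism, and descend --- is the natural one and is essentially the cited construction; your representability argument via invertibility of $(\Phi,\pr_\RR)$ and your uniqueness argument via ODE uniqueness on the atlas plus descent are both sound.

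The genuine gap is the step you relegate to a closing remark: completeness of $\xi_0$ on $X_0$. Compact support of $\xi$ on $\X$ does not make $\xi_0$ compactly supported on an arbitrary \'etale atlas (the preimage in $X_0$ of a compact subset of $\bar{\X}$ need not be compact), and, more seriously, $\xi_0$ need not be complete: an integral curve can leave the chart in finite time. Already for $\X = S^1$ (a manifold, with atlas two overlapping open arcs) a nonvanishing invariant vector field is incomplete on each arc, so $\phi_0 : X_0 \times \RR \to X_0$ does not exist and $(\phi_0,\phi_1)$ is not a strict morphism out of $X_1\times\RR \rightrightarrows X_0\times\RR$. ``Choosing the atlas appropriately'' conceals the whole difficulty: an atlas on which $\xi_0$ is complete is essentially a flow-invariant atlas, whose construction presupposes the flow you are trying to build --- this is precisely the problem the paper later confronts in Lemma \ref{lem_neighborhood_retract}, where producing an $\RR$-equivariant atlas is the entire content of the proof. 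The standard repair is to integrate for short time only (compact support yields a uniform $\epsilon>0$ for which a morphism $\X\times(-\epsilon,\epsilon)\to\X$ can be glued from local flows using uniqueness of integral curves) and then extend to all of $\RR$ by iterating the flow law; alternatively one invokes \cite{Hepworth09} outright, which is what the paper does.
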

The map $\Phi$ is group action of $\RR$ up to 2-isomorphism, and so  it induces an $\RR$ action on the underlying space $\bar{\X}$ and an $\RR$ action up to homotopy on the classifying space $\real{\X}$.

We will use existence of flows to deconstruct the topology of $\X$ in three steps.  First we show that $\X^{\epsilon]}$ deformation retracts onto $\Z$ for sufficiently small $\epsilon$.  Then we show that if the interval $[a,b]$ contains no critical values of $f$, then $\X^{b]}$ deformation retracts onto $\X^{a]}$, and finally we show how the topology changes as $a$ crosses a critical value.

First of all what does a deformation retract mean for stacks?  Say $j : \Z \to \X$ is a closed immersion, then a \textit{deformation retract} consists of a homotopy $h : \X \times I \to \X$ and a projection $\pi : \X \to \Z$, along with 2-morphisms
\[\xymatrix{\Z \times I \ar[r]^{j \times \id} \ar[d]_{\pr_1} & \X \times I \ar[d]^h & & \X \ar@/_/[r]_{\id} \ar@/^/[r]^{h_0} \ar@{}[r] |\Downarrow & \X \\ \Z \ar[r]_j \ar@{=>}[ur] & \X & &  \X \ar@/^/[r]^{h_1} \ar@/_/[r]_{j \circ \pi} \ar@{}[r] |\Downarrow & \X }\]
This definition is a categorification of the definition of deformation retract of spaces (or what's called a strong deformation retract by some authors), and implies that $j$ is a homotopy equivalence.  Both the geometric realization functor and the underlying space functor respect products, so if $\X$ deformation retract onto $\Z$ in the above sense, then $\bar{\X}$ deformation retracts onto $\bar{\Z}$ and $\real{\X}$ is homotopy equivalent to $\real{\Z}$.

\begin{rem} \label{rem_strict_retract}
A strict deformation retract of topological groupoids induces a deformation retract of stacks in the above sense.
\end{rem}

\begin{rem} \label{rem_easy_retract}
A morphism factoring through a substack does so uniquely up to unique isomorphism, so the morphism $\pi$ could be omitted from the definition of a deformation retract, instead just requiring that $h_1 : \X \to \X$ factor through $\Z \hookrightarrow \X$.

Thus if $h: \X \times I \to \X$ is a deformation retract of $\X$ onto $\Z$, and $\X^\prime \subset \X$ is a substack (not necessarily closed) containing $\Z$ such that $h|_{\X^\prime \times I} : \X^\prime \times I \to \X$ factors through $\X^\prime$, then $\X^\prime$ deformation retracts onto $\Z$ as well via this restricted $h$.
\end{rem}

\begin{rem}
For an embedding of topological stacks $j : \Z \to \X$, we choose a presentation for $\X$ and form the mapping cylinder $\stack{M}(j) = [M(j_1) \rightrightarrows M(j_0)] \subset \X \times [0,1]$.  Morita equivalent presentations give Morita equivalent mapping cylinders because the mapping cylinder construction for spaces respects fiber products and epimorphisms.  Furthermore, $\stack{M}(j) \cong \X \times \{0\} \cup_{\Z \times \{0\}} \Z \times [0,1]$ is a pushout in the 2-category of topological stacks. Thus we can simplify our definition of a deformation retract further as \emph{a map $h:\X \times I \to \X$ such that $h_1$ factors through $\Z$, along with an isomorphism $h|_{\stack{M}(j)} \cong \pr_1 |_{\stack{M}(j)}$}.
\end{rem}

We are now ready to prove the

\begin{lem} \label{lem_neighborhood_retract}
For $\epsilon > 0$ sufficiently small, $\X^{\epsilon]}$ contains no critical points of $f$, and the gradient descent flow of $f$ provides a deformation retract of $\X^{\epsilon]}$ onto $\Z$.
\end{lem}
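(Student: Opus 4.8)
The plan is to take for the retracting homotopy the negative gradient descent flow of $f$, reparametrised so that the $t\to\infty$ limit occurs at time $1$; the {\L}ojasiewicz inequality \eqref{eqn_lojasiewicz} is exactly what makes this reparametrisation continuous. To fix $\epsilon$: since $\bar f$ is proper, $\bar\X^{\epsilon]}$ is compact for every $\epsilon$, and since $f$ is Morse on $\X-\Z$ with only the finitely many critical points $c_1,\dots,c_r$, none of which lies in $\Z$, the values $f(c_i)$ are strictly positive. I choose $\epsilon<\min_i f(c_i)$ and small enough that \eqref{eqn_lojasiewicz} holds on $\X^{\epsilon]}$, so that $\X^{\epsilon]}$ contains none of the $c_i$. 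As $\bar\X$ is compact, $-\nabla f$ is compactly supported, so the Fact above yields a flow morphism $\Phi\colon\X\times\RR\to\X$ with $\Phi|_{\X\times\{0\}}\cong\id_\X$; since $f$ is non-increasing along $-\nabla f$, the morphism $\Phi$ carries $\X^{\epsilon]}\times[0,\infty)$ into $\X^{\epsilon]}$, and since $\nabla f$ vanishes identically on the minimum locus $\Z$, uniqueness of flows gives $\Phi|_{\Z\times\RR}\cong\pr_1$.

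The analytic core is a length bound for trajectories, which I would verify on an \'etale atlas $X_0\to\X$. If $x(t)$ solves $\dot x=-\nabla f(x)$, then $\tfrac{d}{dt}f(x(t))=-|\nabla f|^2$, and wherever $f>0$ one has $\tfrac{d}{dt}f^{1-\rho}=-(1-\rho)f^{-\rho}|\nabla f|^2\le-(1-\rho)C^{-1}|\nabla f|=-(1-\rho)C^{-1}|\dot x|$, the middle inequality being \eqref{eqn_lojasiewicz} rewritten as $|\nabla f|\ge C^{-1}f^\rho$. Integrating gives $\int_0^\infty|\dot x|\,dt\le\tfrac{C}{1-\rho}f(x(0))^{1-\rho}$, so every trajectory has finite length, controlled by a continuous function of $f(x(0))$ that vanishes on $\Z$. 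Hence $x(t)$ converges to some limit $x_\infty$; since $f(x(t))$ decreases to a limit $\ell\ge0$ along which $|\nabla f|\ge C^{-1}\ell^\rho$ — which would force $f\to-\infty$ unless $\ell=0$ — we conclude $x_\infty\in\Z$. Write $\pi(x)=\lim_{t\to\infty}\Phi(x,t)\in\Z$.

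Now I assemble the homotopy. On a proper \'etale presenting groupoid the flow pulls back to an honest, equivariant flow on the atlas, and I set $h(x,s)=\Phi(x,\tfrac{s}{1-s})$ for $s\in[0,1)$ and $h(x,1)=\pi(x)$; this restricts to the identity on $\Z$ for every $s$, equals $\id$ at $s=0$ (via the isomorphism $e_\Phi$), and has $h_1=\pi$ factoring through $\Z\hookrightarrow\X$. Continuity on $[0,1)$ is immediate from that of $\Phi$; continuity of $\pi$ and of $h$ at $s=1$ uses the uniform length bound: given $\delta>0$, pick $T$ with $d(\Phi(x,T),\pi(x))<\delta$ and $\tfrac{C}{1-\rho}f(\Phi(x,T))^{1-\rho}<\delta$, apply continuous dependence on initial conditions at the fixed time $T$ to get a neighbourhood of $x$, and on that neighbourhood bound the tail $d(\Phi(y,t),\pi(y))$ for $t\ge T$ by $\tfrac{C}{1-\rho}f(\Phi(y,T))^{1-\rho}$. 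By Remark \ref{rem_easy_retract} this data is a strict deformation retract of topological groupoids, and by Remark \ref{rem_strict_retract} it descends to a deformation retract of $\X^{\epsilon]}$ onto $\Z$.

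The crux is the continuity at $s=1$, i.e.\ upgrading an infinite-time gradient flow to an honest homotopy without any nondegeneracy assumption near $\Z$: there the flow reaches $\Z$ only asymptotically and the limit map $\pi$ is a priori discontinuous, and it is precisely the {\L}ojasiewicz bound $\mathrm{length}\le\tfrac{C}{1-\rho}f^{1-\rho}\to0$ near $\Z$ that rescues it. By comparison the stacky bookkeeping — existence, uniqueness and $\Z$-tangency of the flow, and descent from the presenting groupoid — is routine given Hepworth's foundations \cite{Hepworth09} and Remarks \ref{rem_strict_retract}--\ref{rem_easy_retract}.
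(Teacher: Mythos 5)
Your analytic core --- the arc-length bound $\int_0^\infty|\dot x|\,dt\le\tfrac{C}{1-\rho}f(x(0))^{1-\rho}$ from \eqref{eqn_lojasiewicz}, convergence of trajectories, and uniform continuity of the reparametrised flow at $s=1$ --- is correct and is exactly the content of Proposition \ref{prop_stability_gradient_flow} / Corollary \ref{cor_stability_gradient_flow}, which the paper simply cites at this point. The gap is in the step you dismiss as ``routine stacky bookkeeping.'' You define $h(x,s)=\Phi(x,\tfrac{s}{1-s})$, $h(\cdot,1)=\pi$ on the \emph{object space} $U_0$ of an \'etale atlas and then assert that ``this data is a strict deformation retract of topological groupoids.'' But a strict deformation retract of groupoids requires a compatible homotopy on the arrow space $U_1=U_0\times_\X U_0$ as well: for each arrow $g\colon x\to y$ you need a continuously varying arrow $\pi(x)\to\pi(y)$ at time $\infty$, compatible with source, target, and composition. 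For $s<1$ this comes for free because the flow on $U_1$ is the \'etale pullback of the flow on $U_0$; the whole difficulty is extending the $[0,\infty)$-action on $U_1$ to $[0,\infty]$. This is where the paper does its real work: it uses that $\X$ being DM makes $(s,t)\colon U_1\to U_0\times U_0$ a \emph{proper immersion}, so that the limit $(\pi(x),\pi(y))$ of the translates of $(s(g),t(g))$ lies in the closed image of $U_1$ and lifts uniquely and continuously to $U_1$ near each point $(\infty,p)$. Without an argument of this kind (or, alternatively, applying Proposition \ref{prop_stability_gradient_flow} directly to $f|_{U_1}$ with a pullback metric and then matching domains and checking compatibility with $s$, $t$, and composition by density of $[0,\infty)$), the assertion that $\pi$ is a map of groupoids is unjustified --- and it is the point at which the Deligne--Mumford hypothesis actually enters the lemma.

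Two smaller points. First, you have the two remarks in swapped roles: it is Remark \ref{rem_strict_retract} that promotes a strict deformation retract of groupoids to one of stacks, and Remark \ref{rem_easy_retract} that lets you restrict the resulting retract of the open substack $\mathfrak{U}=[U_1\rightrightarrows U_0]$ to the flow-invariant substack $\X^{\epsilon]}\subset\mathfrak{U}$ (which is where compactness of $\bar\Z$ is used, to ensure $\X^{\epsilon]}\subset\mathfrak{U}$ for small $\epsilon$). Second, your choice of $\epsilon<\min_i f(c_i)$ is fine but not needed: as noted in the appendix, \eqref{eqn_lojasiewicz} along $\Z$ already forces $\Z$ to be isolated in the critical locus, so smallness of $\epsilon$ relative to where \eqref{eqn_lojasiewicz} holds suffices.
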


\begin{proof}
The key idea is to find, for $\epsilon$ small enough, an atlas for $\X^{\epsilon]}$ that is \emph{equivariant} for the $\RR$ action.  Start with an \'{e}tale atlas $a: X_0 \to \X$.  $f|_{X_0}$ still satisfies the inequality \eqref{eqn_lojasiewicz} at every point in $Z_0 = (f_{X_0})^{-1} \{0\}$.

By Corollary \ref{cor_stability_gradient_flow} there is an open subset $U_0 \subset X_0$ containing $Z_0$ such that $(-\nabla f)_{X_0}$ is integrable to a flow $\Phi_0 : U_0 \times [0,\infty) \to U_0$ which extends uniquely to a deformation retract $h_0 : U_0 \times [0, \infty] \to U_0$ onto $Z_0$.

The \'{e}tale map $U_0 \to \X$ is equivariant with respect to the action of the semigroup $[0,\infty)$, so $[0,\infty)$ acts on $U_1 := U_0 \times_{\X} U_0$ as well, and we would like to lift the action to $\infty$, i.e. find a lift
$$\xymatrix{ U_1 \times [0,\infty) \ar[rr]^{\Phi_1} \ar@{^{(}->}[d] & & U_1 \ar[d]^{(s,t)} \\ U_1 \times [0,\infty] \ar[r]^-{(s,t)} \ar@{-->}@/^/[urr]^{\exists ! h_1} & U_0 \times U_0 \times [0,\infty] \ar[r]^-{h_0 \times h_0} & U_0 \times U_0}$$
One shows the existence and uniqueness of $h_1$ using the fact that, because $\X$ is \DM, $U_1 \to U_0 \times U_0$ is a proper immersion.

It suffices to show existence and uniqueness of $h_1$ in a neighborhood of each point $(\infty,p) \in [0,\infty] \times U_1$.  If $(x,y) \in U_0 \times U_0$ is the image of $(\infty,p)$, then there is a compact neighborhood $N$ of $(x,y)$ such that $(s,t)^{-1} (N)$ is a disjoint union of connected compact subsets, each mapped homeomorphically onto its image by $(s,t)$.  Finally for a neighborhood of $(\infty, p)$ whose image in $U_0 \times U_0$ is contained in $N$ there exists a unique lift.

Thus we get a map of groupoids $h : [0,\infty] \times (U_1 \rightrightarrows U_0) \to (U_1 \rightrightarrows U_0)$ providing a deformation retract of $U_1 \rightrightarrows U_0$ onto the subgroupoid $Z_1 \rightrightarrows Z_0$.  By Remark \ref{rem_strict_retract} $h$, gives a deformation retract of the open substack $\mathfrak{U} = [U_1\rightrightarrows U_0]$ onto $\Z$.  Because $\bar{\Z}$ is compact, $\X^{\epsilon]}$ is contained in $\mathfrak{U}$ for sufficiently small $\epsilon$, so by Remark \ref{rem_easy_retract} the result follows.
\end{proof}

Next we study what happens when there are no critical points between $a$ and $b$.
\begin{prop} \label{morse_no_crit}
If $f$ has no critical points in $\X^{[a,b]}$, then there is a deformation retract of $\X^{b]}$ onto $\X^{a]}$.
\end{prop}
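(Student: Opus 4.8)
The plan is to mimic the classical ``no critical values'' step of Morse theory --- Milnor's rescaled gradient flow --- the only genuinely new ingredient being the existence-of-flows Fact recorded above, which lets us carry the construction out on $\X$ itself rather than on a chart. First I would thicken the band. The critical locus of $f$ is $\Z$ together with the finite set $\{c_1,\dots,c_r\}$, so $f$ has only finitely many critical values (and $0$ is one of them when $\Z\neq\emptyset$); since $[a,b]$ contains none of them --- so in particular $a>0$ whenever $\Z\neq\emptyset$ --- we may choose $0<a'<a\le b<b'$ with $\X^{[a',b']}$ containing no critical point of $f$. As $\bar f$ is proper, $\bar{\X}^{[a',b']}=\bar f^{-1}[a',b']$ is compact, so $|\nabla f|\ge\delta$ on $\X^{[a',b']}$ for some $\delta>0$.

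Next I would build a rescaled gradient field and flow it. Choose a smooth $\lambda:\RR\to[0,1]$ with $\lambda\equiv 1$ on $[a,b]$ and $\supp\lambda\subset(a',b')$, and set
\[
\xi=-(\lambda\circ f)\,\frac{\nabla f}{|\nabla f|^{2}}
\]
on the open substack $f^{-1}(a',b')$, where $|\nabla f|\ge\delta$ makes this smooth, and $\xi=0$ elsewhere; since $\lambda\circ f$ vanishes near the levels $a'$ and $b'$ while $\nabla f/|\nabla f|^{2}$ stays bounded there, $\xi$ is a smooth vector field on all of $\X$, supported in the compact substack $\bar f^{-1}[a',b']$. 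By the Fact above it integrates to a flow $\Phi:\X\times\RR\to\X$ together with $e_\Phi:\Phi|_{\X\times\{0\}}\Rightarrow\id_\X$. Since $\xi(f)=-(\lambda\circ f)$ is $\le 0$ everywhere and $\equiv-1$ on $\X^{[a,b]}$, the function $f$ is non-increasing along the positive-time flow and decreases at unit rate as long as a trajectory stays in $\X^{[a,b]}$.

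Finally I would assemble the deformation retract, using Milnor's reparametrisation so that it lands exactly on $\X^{a]}$. Define $h:\X^{b]}\times I\to\X$ as the composite of $\id\times\tau$, with $\tau(q,t)=t\cdot\max\{f(q)-a,\,0\}$, followed by $\Phi$; the reparametrisation $\tau$ is a continuous function of $f$ and $t$, so $h$ is a continuous morphism of topological stacks, and $f\circ h\le f\le b$ (as $f$ is non-increasing along $\Phi$ and $\tau\ge 0$), so $h$ factors through the closed substack $\X^{b]}$ --- compare Remark~\ref{rem_easy_retract}. Then $h_0\cong\id$ via $e_\Phi$; on $\X^{a]}$ one has $\tau\equiv 0$, so $h|_{\X^{a]}\times I}\cong\pr_1$ via $e_\Phi$; and a point $q$ with $f(q)\ge a$ flows for time $f(q)-a$, during which its $f$-value falls at unit rate from $f(q)\le b$ to $a$, so $h_1$ carries all of $\X^{b]}$ into $\X^{a]}$. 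This is exactly the data of a deformation retract of $\X^{b]}$ onto $\X^{a]}$, every $2$-morphism being induced by $e_\Phi$. As in Lemma~\ref{lem_neighborhood_retract}, there is no real obstacle beyond the cutoff-function bookkeeping and the chore of checking that every map in sight is a genuine morphism of stacks and every identification a genuine $2$-morphism; the one step using the \DM\ hypothesis, namely producing the flow $\Phi$ on $\X$, is handled wholesale by the Fact above.
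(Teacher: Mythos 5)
Your proof is correct and follows essentially the same route as the paper: a compactly supported vector field $\xi$ with $\xi\cdot f\le 0$ and $\xi\cdot f=-1$ on $\X^{[a,b]}$, integrated via the Fact, composed with the reparametrisation $\max(0,\,t(f-a))$, with all $2$-morphisms induced by $e_\Phi$. The only difference is that you spell out the Milnor-style construction of $\xi$ (cutoff times $\nabla f/|\nabla f|^2$), which the paper simply asserts exists.
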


\begin{proof}
Let $\xi$ be a vector field with compact support such that $\xi \cdot f \leq 0$ and $\xi \cdot f = -1$ on $\X^{[a,b]}$.  Let $\Phi : \X \times \RR \to \X$ be the flow of $\xi$ and form the composition
$$h : \X^{b]} \times [0,1] \xrightarrow{\left( \id_{\X^{b]}}, \max(0, t (f-a)) \right) } \X^{b]} \times [0,\infty) \xrightarrow{\Phi} \X$$
$\bar{f}$ decreases along the flow lines of $\bar{\Phi}$ and decreases with constant rate $1$ in $\bar{\X}^{[a,b]}$, so it follows that $h : \X^{b]} \times [0,1] \to \X$ factors through $\X^{b]}$ and that $h_1$ factors through $\X^{a]}$.\footnote{A map $\psi : \stack{Y} \to \X$ factors through $\X^{b]}$ if and only if $f \circ \psi : \stack{Y} \to \RR$ factors through $(-\infty,b]$, so it suffices to check on the level of underlying spaces.} The isomorphism $e_\Phi : \Phi|_{\X \times \{0\}} \Rightarrow \id_\X$ induces an isomorphism $h|_{\stack{M}(j)} \simeq \pr_1|_{\stack{M}(j)}$ because $\max (0, t (f-a)) = 0$ on the substack $\stack{M}(j) \subset \X^{b]} \times I$.
\end{proof}

\begin{rem}
An argument identical to that in theorem 3.1 of \cite{Milnor63} or theorem 7.5 of \cite{Hepworth09} can also be used to show that $\X^{a]} \simeq \X^{b]}$ as stacks.
\end{rem}

\begin{prop}
Let $c \in \RR$ be a critical value of $f$, and assume that $f$ is Morse near $f^{-1}\{c\}$.  Denote the critical points in $f^{-1} \{c\}$ by $p_1,\ldots,p_r$.  Then for $\epsilon$ sufficiently small there is a closed topological substack
$$\X^{c-\epsilon]} \subset \X^\prime \subset \X^{c+\epsilon]}$$
such that $\X^{c+\epsilon]}$ deformation retracts onto $\X^\prime$, and
$$\X^\prime \simeq \X^{c-\epsilon]} \cup_{\coprod_i [S_i / \Aut_{p_i} ]} \coprod_i [ D_i / \Aut_{p_i} ],$$
where $D_i$ and $S_i$ are the disc and sphere of radius $\epsilon$ in the index representation of $\Aut_{p_i}$.
\end{prop}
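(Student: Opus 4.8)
The plan is to run Milnor's handle-attachment argument (\cite{Milnor63}, \S3) equivariantly, with each critical point contributing a \emph{quotient} handle. First I would localize. Since $\bar f$ is proper the $p_i$ are finitely many, and since $\X$ is \DM\ each $p_i$ has an open neighborhood equivalent to a quotient $[B_i/G_i]$ with $G_i=\Aut_{p_i}$ finite; shrinking, we may take these neighborhoods pairwise disjoint in $\X$. By the equivariant Morse lemma — the \DM\ analogue of the Morse lemmas of \cite{Hepworth09}, already implicit in the discussion of $\ind_c$ and $T_c\X_\pm$ above — we may choose $B_i\subset T_{p_i}\X=T_{p_i}\X_-\oplus T_{p_i}\X_+$ to be a ball on which $G_i$ acts linearly, preserving the splitting, and on which $f=c-\|\xi\|^2+\|\eta\|^2$ for $G_i$-invariant norms $\|\cdot\|$ on the two summands, with $(\xi,\eta)$ the corresponding coordinates. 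Having fixed the charts, I would then pick $\epsilon>0$ small enough that the closed ball $\{\|\xi\|^2+\|\eta\|^2\le 2\epsilon\}$ lies in $B_i$ for every $i$ and that $c$ is the only critical value of $f$ in $[c-\epsilon,c+\epsilon]$ (possible since $f$ has finitely many critical points on $\X-\Z$, and $\Z=f^{-1}\{0\}$ is disjoint from $\bar f^{-1}[c-\epsilon,c+\epsilon]$ as $c>0$).

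Next, mimic Milnor's modified function. Choose $\mu:[0,\infty)\to[0,\infty)$ smooth with $\mu(0)\in(\epsilon,2\epsilon)$, $\mu\equiv 0$ on $[2\epsilon,\infty)$, and $-1<\mu'\le 0$, and set $F=f-\mu(\|\xi\|^2+2\|\eta\|^2)$ on each $[B_i/G_i]$ and $F=f$ elsewhere. The argument $\|\xi\|^2+2\|\eta\|^2$ is $G_i$-invariant, so $F$ descends to $[B_i/G_i]$, and since $\mu$ vanishes near $\partial B_i$ the pieces glue to a global $F\in C^\infty(\X)$ with $F\le f$, with $F=f$ off $\coprod_i[B_i/G_i]$, and with $\bar F$ proper. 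Milnor's computations are valid verbatim inside the charts and give: $F^{-1}(-\infty,c+\epsilon]=\X^{c+\epsilon]}$; the critical points of $F$ are exactly $p_1,\dots,p_r$, each with $F(p_i)=c-\mu(0)<c-\epsilon$; hence $F$ has no critical points in $\bar F^{-1}[c-\epsilon,c+\epsilon]$. Running the argument of Proposition~\ref{morse_no_crit} with $F$ in place of $f$ on the interval $[c-\epsilon,c+\epsilon]$ — its proof uses only properness of $\bar F$ and the absence of critical points in the band, both of which hold here — the gradient descent flow of $F$ deformation retracts $\X^{c+\epsilon]}$ onto $\X':=F^{-1}(-\infty,c-\epsilon]$.

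It remains to identify $\X'$. Off the charts $F=f$, so $\X'=\X^{c-\epsilon]}\cup\coprod_i[H_i/G_i]$ where $H_i=\{F\le c-\epsilon\}\cap B_i$ is the $G_i$-invariant ``handle.'' Milnor's explicit deformation retraction of $M^{c-\epsilon}\cup H$ onto $M^{c-\epsilon}\cup e^\lambda$ is given, in the $(\xi,\eta)$-coordinates, by pushing $\eta$ toward $0$ in three cases according to the signs of $\|\xi\|^2-\epsilon$ and $\|\xi\|^2-\|\eta\|^2-\epsilon$; each of these formulas commutes with the linear $G_i$-action, so the retraction descends to $[B_i/G_i]$, equals the identity near $\partial B_i$ and wherever $f\le c-\epsilon$, and therefore extends by the identity to a deformation retract of $\X'$ onto $\X^{c-\epsilon]}\cup\coprod_i[D_i/G_i]$, with $D_i=\{\eta=0,\ \|\xi\|^2\le\epsilon\}$ the disc in the index representation $T_{p_i}\X_-$ and $S_i=\partial D_i$, as in the statement. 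Finally one checks that $[D_i/G_i]\cap\X^{c-\epsilon]}=[S_i/G_i]$ and that this is a closed embedded substack of both $[D_i/G_i]$ and $\X^{c-\epsilon]}$ (each underlying space being compact), so by the gluing theorem for stacks along closed embedded substacks (\cite{Noohi05}) the union $\X^{c-\epsilon]}\cup\coprod_i[D_i/G_i]$ is the $2$-categorical colimit $\X^{c-\epsilon]}\cup_{\coprod_i[S_i/\Aut_{p_i}]}\coprod_i[D_i/\Aut_{p_i}]$; this gives the asserted equivalence $\X'\simeq\X^{c-\epsilon]}\cup_{\coprod_i[S_i/\Aut_{p_i}]}\coprod_i[D_i/\Aut_{p_i}]$.

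I expect the main obstacle to be this last point: promoting the set-theoretic union $\X^{c-\epsilon]}\cup\coprod_i[D_i/G_i]$ to a genuine $2$-categorical pushout, since, as noted in the introduction, such colimits of stacks are subtle and exist only for gluings along suitably embedded substacks — so one must verify the embeddedness/closedness hypotheses of the gluing theorem carefully. Everything else is the routine but essential observation that every ingredient of Milnor's proof — the cutoff $\mu$, the modified function $F$, and the compressing homotopy — is built from $G_i$-invariant data and hence takes place inside the quotient charts $[B_i/G_i]$; a secondary point worth flagging is the verification that Proposition~\ref{morse_no_crit}, stated for the section's standing function, applies equally to $F$.
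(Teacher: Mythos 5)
Your proposal is correct and follows essentially the same route as the paper: equivariant Morse charts $[B_i/\Aut_{p_i}]$, Milnor's modified function $F$ descending to the quotient charts, Proposition \ref{morse_no_crit} applied to $F$ to retract $\X^{c+\epsilon]}$ onto $\{F\le c-\epsilon\}$, an equivariant compression of the handles onto the discs, and Theorem 16.9 of \cite{Noohi05} to promote the union to a $2$-categorical colimit. The one point you flag as the main obstacle is resolved in the paper by the observation that, for a \DM\ stack, the required local left lifting property is \emph{tautological} --- an \'{e}tale atlas is a local homeomorphism and every pair of subspaces satisfies the LLLP with respect to local homeomorphisms --- so a union of two closed substacks of a \DM\ stack is automatically a $2$-categorical union, and no further embeddedness hypotheses need checking.
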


\begin{proof}
Again, the idea of the proof is identical to the discussion in \cite{Milnor63} or \cite{Hepworth09}, one just has to check that the argument gives a cell attachment of \textit{stacks} and not just underlying spaces.

For each $p_i$, identify an open substack containing $p_i$ with $[U_{p_i} / \Aut p_i]$, where $U_{p_i}$ is a ball around the origin in $T_{p_i} \X$, and coordinates have been chosen in $T_{p_i} \X = T_{p_i} \X_+ \oplus T_{p_i} \X_-$ such that the function $f|_{U_i} = c + |u_+|^2 - |u_-|^2$.  Choose an $\epsilon$ smaller than the radii of all of these Morse coordinate patches, and define the closed substack of $\X^{c+\epsilon]}$
$$\X^\prime = \{f \leq c-\epsilon\} \cup \bigcup_{i} \{u_+ =0 \text{ and } |u_-|^2 \leq \epsilon\}$$
Where we have used the slightly informal notation $\{u_+ =0 \text{ and } |u_-|^2 \leq \epsilon\}$ for the closed substack $[D_i / \Aut_{p_i}]$.

Theorem 16.9 of \cite{Noohi05} gives a criterion for checking that $\X^\prime$ is in fact a 2-categorical union.  First of all, $\X^\prime$ is manifestly a gluing in the language of \ibid, in that $\X^\prime \setminus \coprod_i [S_i/\Aut_{p_i}] \simeq  \X^{c-\epsilon]} \sqcup \coprod_{i} [(D_i\setminus S_i) / \Aut_{p_i}]$.  The second criterion -- the existence of an atlas $a : X^\prime_0 \to \X^\prime$ such that the pair of invariant subspaces $a^{-1} ([S_i/\Aut_{p_i}]) \subset a^{-1}([D_i/ \Aut_{p_i}])$ satisfies the local left lifting property (LLLP) with respect to the class of covering maps -- is tautologically satisfied because every pair satisfies the LLLP with respect to local homeomorphisms.\footnote{Although it is not necessary here, one could even find an atlas $a:X_0^\prime \to \X^\prime$ for which $a^{-1}([D_i/ \Aut_{p_i}]) = D_i$ and likewise for $S_i$}  So we have really verified that a union of two substacks of a \DM\ stack is always a 2-categorical union.

The rest of the argument for proceeds exactly as in \cite{Milnor63} and \cite{Hepworth09}.  One introduces an auxiliary function $F$ such that: $F$ agrees with $f$ outside of the Morse coordinate patches (on which $F\leq f$), $\{F\leq c+\epsilon \} = \X^{c+\epsilon]}$, and $F(p_i) < c-\epsilon$.  Then one uses Proposition \ref{morse_no_crit} to retract $\X^{c+\epsilon]}$ onto $\{F \leq c-\epsilon\}$, a substack which differs from $\X^{c-\epsilon}$ only within the Morse coordinate patches.  Finally one constructs a manifestly $\Aut_{p_i}$ equivariant deformation retract of $\{ F \leq c-\epsilon \}$ onto $\X^\prime$ in each $[U_i / \Aut_{p_i}]$ which restricts to the identity on $\X^{c-\epsilon]} \cap [U_i / \Aut_{p_i}]$.  These can then be glued to give the global deformation retract of $\{F \leq c-\epsilon\}$ onto $\X^\prime$.
\end{proof}

For reference we now summarize the main theorem of Morse theory for compact Deligne-Mumford stacks which we have just proven
\begin{thm}
Let $f : \X \to \RR$ be a smooth nonnegative function on a compact Deligne-Mumford stack $\X$ achieving its global minimum on the closed topological substack $f^{-1} \{0\}$.  Assume that
\begin{itemize}
\item $f$ satisfies a {\L}ojasiewicz inequality \eqref{eqn_lojasiewicz} at every $p \in \bar{\Z}$, and
\item $f$ is Morse on the open substack $\X - \Z$.
\end{itemize}
Then $\X$ can be obtained from $\Z$ by a finite sequence of deformation retracts and 2-categorical attachments of $[D_i / \Aut_{p_i} ]$ along $[S_i / \Aut_{p_i}]$ where $p_i$ ranges over the critical points of $f$ in $\bar{\X} - \bar{\Z}$ and $D_i \subset T_{p_i} \X_-$ is a small disc with boundary $S_i$.
\end{thm}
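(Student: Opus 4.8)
The plan is to obtain the theorem as a purely formal concatenation of the three local results established above — Lemma~\ref{lem_neighborhood_retract} (the gradient flow retracts $\X^{\epsilon]}$ onto $\Z$ for small $\epsilon$), Proposition~\ref{morse_no_crit} (retraction of $\X^{b]}$ onto $\X^{a]}$ when $[a,b]$ has no critical values), and the preceding proposition (the change of topology as one crosses a single critical value) — applied in turn as one walks up the real line from the minimum value $0$ to $+\infty$.

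First I would use compactness of $\bar\X$ twice. It makes $\bar f:\bar\X\to\RR$ bounded, so that $\X=\X^{N]}$ for $N=\max\bar f$, and it forces $f$ to have only finitely many critical points in $\bar\X-\bar\Z$, hence finitely many critical values, which I list as $0<c_1<c_2<\cdots<c_m$; these are all strictly positive because $\Z=f^{-1}\{0\}$ is the global minimum. Now fix $\delta>0$ small enough that $2\delta$ is less than $c_1$, than $N-c_m$, and than every gap $c_{j+1}-c_j$, that the preceding proposition applies at each $c_j$ with parameter $\delta$, and (shrinking further if needed) that Lemma~\ref{lem_neighborhood_retract} gives a deformation retract of $\X^{\delta]}$ onto $\Z$ with $\X^{\delta]}$ containing no critical points.

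Then the decomposition is assembled in order. We have $\X^{\delta]}\searrow\Z$ by Lemma~\ref{lem_neighborhood_retract}; $\X^{c_1-\delta]}\searrow\X^{\delta]}$ by Proposition~\ref{morse_no_crit}, since $[\delta,c_1-\delta]$ contains no critical values; then crossing $c_1$, the preceding proposition yields $\X^{c_1-\delta]}\subset\X'\subset\X^{c_1+\delta]}$ with $\X^{c_1+\delta]}\searrow\X'$ and $\X'\simeq\X^{c_1-\delta]}\cup_{\coprod_i[S_i/\Aut_{p_i}]}\coprod_i[D_i/\Aut_{p_i}]$, the union over the critical points $p_i$ with $f(p_i)=c_1$ and each $D_i\subset T_{p_i}\X_-$ a disc of radius $\delta$. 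One then repeats across $[c_1+\delta,c_2-\delta]$, across $c_2$, and so on up through $c_m$, and finally applies Proposition~\ref{morse_no_crit} once more to get $\X=\X^{N]}\searrow\X^{c_m+\delta]}$. Reading this chain from $\Z$ upward exhibits $\X$ as the asserted finite alternating sequence of deformation retracts and $2$-categorical attachments of disc quotients, with each attaching pair modeled on a small disc in the index representation $T_{p_i}\X_-$ exactly as in the preceding proposition.

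The only point requiring a word of care is a critical value $c_j$ supporting several critical points $p_i$: the preceding proposition attaches the disjoint union $\coprod_i[D_i/\Aut_{p_i}]$ in a single step. Since for $\delta$ small the Morse coordinate patches are disjoint, the argument in that proof — that a union of two substacks of a \DM\ stack is automatically a $2$-categorical union — lets one peel this off one $[D_i/\Aut_{p_i}]$ at a time, so the sequence can genuinely be taken to attach a single disc quotient at each stage; alternatively one may perturb $f$ slightly near level $c_j$ to separate the critical values without changing anything up to homotopy, as in the remark following Proposition~\ref{morse_no_crit}. Beyond this bookkeeping there is nothing new to prove, so I anticipate no real obstacle.
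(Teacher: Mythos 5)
Your proposal is correct and matches the paper's intent exactly: the theorem is stated there only as a summary of Lemma~\ref{lem_neighborhood_retract}, Proposition~\ref{morse_no_crit}, and the critical-value proposition, concatenated along the value axis just as you describe. Your added bookkeeping (finiteness of critical values via compactness and the {\L}ojasiewicz isolation of $\Z$ from the rest of the critical locus, and the optional splitting of simultaneous critical points into separate attachments) is sound and fills in details the paper leaves implicit.
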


\section{Homological Lefschetz theorem for Artin stacks} \label{homological_artin_LHT}
The \LHT\ for Artin\footnote{An Artin stack is just a differentiable stack admitting a surjective submersion from a manifold} stacks is beyond the current reach of Morse theoretic techniques.  The theory of integration of vector fields on Artin stacks has been worked out in \cite{Hepworth09b}, but Morse theory has not been developed yet.

In this section we describe a local-to-global proof of the homological \LHT\ that applies to differentiable Artin stacks admitting a presentation by compact manifolds.

In the complex category, proper Artin stacks are close to being gerbes, so in order to have a version of the \LHT\ with broader applications, we will leave the category of complex stacks.  For instance, the following theorem applies to a global quotient of a compact complex manifold by a compact Lie group.

\begin{thm} \label{thm_artin_LHT}
Let $\LL$ be a hermitian line bundle on a differentiable Artin stack $\X$, and let $s$ be a section.  Assume that there exists a proper submersion $X_0 \to \X$ from a compact complex manifold $X_0$ such that the pullback $L_0 = \LL|_{X_0}$ is holomorphic of type $k$ and the section $s|_{X_0}$ is holomorphic.

As before let $\Z \subset \X$ be the topological substack on which $s$ vanishes. Then $H^i(\X,\Z) = H_i(\X,\Z) = 0$ for $i<k$.
\end{thm}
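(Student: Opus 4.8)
The plan is to reduce the statement to the Deligne--Mumford case (Corollary \ref{cor_homological_DM_LHT}) applied to $X_0$, and then push the vanishing down to $\X$ using the simplicial presentation and a spectral sequence argument. First I would form the Čech nerve $X_\bullet$ of the proper submersion $X_0 \to \X$, so that $X_p = X_0 \times_{\X} \cdots \times_{\X} X_0$ ($p+1$ factors) is a simplicial compact manifold presenting $\X$, and let $Z_\bullet \subset X_\bullet$ be the induced simplicial presentation of $\Z$ (recall $Z_p = (s|_{X_p})^{-1}\{0\}$, which is closed). Singular (co)homology of the pair $(\X,\Z)$ is computed, as recalled in Section \ref{LHT_DM_stacks} following \cite{Behrend04}, by the double complex $E_1^{p,q} = H^q(X_p, Z_p) \Rightarrow H^{p+q}(\X,\Z)$ coming from the simplicial relative (co)homology of $(X_\bullet, Z_\bullet)$; homologically there is the analogous spectral sequence $E^1_{p,q} = H_q(X_p,Z_p) \Rightarrow H_{p+q}(\X,\Z)$.

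The key step is to show $H^q(X_p, Z_p) = 0$ for all $q < k$ and all $p \geq 0$. For $p = 0$ this is exactly Corollary \ref{cor_homological_DM_LHT} applied to the smooth projective-type situation on $X_0$: $L_0$ is holomorphic of type $k$, $s|_{X_0}$ is holomorphic with zero locus $Z_0$, and $X_0$ is compact, so the classical Bott--Lefschetz argument gives $H^q(X_0, Z_0) = 0$ for $q < k$. For $p \geq 1$, the point is that each $X_p$ carries a holomorphic line bundle $L_p$ of type $k$ (the pullback of $L_0$ along either projection — these agree because the transition data of $\LL$ is holomorphic once trivialized suitably, and "type $k$" is preserved under the étale maps $X_p \to X_0$ in the nerve) together with a holomorphic section with zero locus $Z_p$; since $X_0 \to \X$ is a submersion, each $X_p$ is again a compact complex manifold of the appropriate dimension, so Corollary \ref{cor_homological_DM_LHT} (or rather the underlying manifold statement, i.e. the classical \LHT) applies verbatim to give the vanishing in degrees $< k$. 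Once this is established, the first-quadrant spectral sequence has $E_1^{p,q} = 0$ whenever $q < k$, hence $H^i(\X,\Z) = 0$ for $i < k$; the homological statement follows the same way.

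The main obstacle is the verification for $p \geq 1$ that $(X_p, L_p, s|_{X_p})$ really satisfies the hypotheses of the classical \LHT{} — in particular that the pulled-back line bundle on the fiber products $X_p$ is still \emph{holomorphic} of type $k$ with a \emph{holomorphic} section, not merely a smooth hermitian bundle. This is where Remark \ref{artin_type_k} does the work: type $k$ is Artin-local, so it suffices to know that $L_p$ is holomorphic and $s|_{X_p}$ holomorphic, which in turn follows because the projections $X_p \to X_0$ are holomorphic submersions (being base changes of the holomorphic submersion $X_0 \to \X$ — here one uses that $X_0 \to \X$ is a submersion of \emph{complex} stacks in a neighborhood of $\Z$, or more honestly that the fiber products inherit complex structures for which the face maps are holomorphic). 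One must be slightly careful that $\X$ itself is only assumed differentiable, so the complex structure is only present on $X_0$ and its iterated fiber products over $\X$ via $X_0$; but this is exactly the data the spectral sequence sees, so no further structure on $\X$ is needed. Having dispensed with this, the rest is the standard convergence argument for the Čech-to-derived-functor-type spectral sequence, which requires no genericity or nondegeneracy hypotheses since none were needed on $X_0$.
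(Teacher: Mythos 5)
Your outer skeleton — the simplicial nerve $X_\bullet$, the closed simplicial subspace $Z_\bullet$, and the spectral sequence $E^1_{p,q} = H_q(X_p,Z_p) \Rightarrow H_{p+q}(\X,\Z)$ reducing everything to the vanishing of $H_q(X_p,Z_p)$ for $q<k$ — is exactly the paper's. But the step you yourself flag as ``the main obstacle'' is a genuine gap, and the way you dispose of it does not work. You claim each $X_p = X_0\times_\X\cdots\times_\X X_0$ is a compact complex manifold carrying a holomorphic type-$k$ bundle with holomorphic section, because the face maps are ``base changes of the holomorphic submersion $X_0\to\X$.'' But $\X$ is only a \emph{differentiable} stack: $X_0\to\X$ is a smooth submersion, not a holomorphic one, and the fiber products $X_p$ inherit no complex structure. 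This is not a technicality — it fails in the motivating examples. For $\X=[X/G]$ with $G$ a compact Lie group acting smoothly but not holomorphically (e.g.\ $\ZZ/2$ acting on $\CP^l$ by complex conjugation, as in the paper's example), one has $X_p = G^{\times p}\times X$ with one of the face maps given by the group action, which is not holomorphic; there is no complex structure on $X_p$ for which all the pulled-back data is holomorphic, so the classical \LHT\ cannot be applied to $(X_p,Z_p)$ as you propose. Your parenthetical escape hatch (``$X_0\to\X$ is a submersion of complex stacks near $\Z$'') is an extra hypothesis not present in the theorem, and assuming it would gut the result of its intended applications.

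The paper's proof avoids this by never putting complex geometry on $X_p$. It uses the complex structure only on $X_0$, where it perturbs $f|_{X_0}=|s|_{X_0}|^2$ to a function $\phi_0$ satisfying a {\L}ojasiewicz inequality near its zero locus $Z_0$ and Morse away from $Z_0$ with critical points of index $\geq k$ (Bott's estimate). It then sets $\phi_p = \phi_0|_{X_p}$ via a single chosen face map $X_p\to X_0$ — noting explicitly that $\phi_0$ need not be invariant, so this choice matters — and observes that both properties (a {\L}ojasiewicz global minimum along $Z_p$, and the Morse-Bott condition with critical manifolds of index $\geq k$) are preserved under pullback along a surjective submersion of smooth manifolds. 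Nondegenerate Morse(-Bott) theory on the smooth manifold $X_p$ then gives $H_q(X_p,Z_p)=0$ for $q<k$, and your spectral sequence finishes the argument. So the fix is to replace ``apply the \LHT\ to each $X_p$'' with ``pull back the Morse function from $X_0$ to each $X_p$''; the rest of your write-up stands.
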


\begin{proof}
From the proper submersion $X_0 \to \X$ we get a closed, full, saturated sub-groupoid $Z_\bullet \subset X_\bullet$ presenting $\Z \subset \X$.  Let $\{X_p\}$ and $\{Z_p\}$ denote the respective simplicial nerves.

We can form the smooth function $f = |s|^2$ on $\X$, and as before $\Z$ is precisely the zero locus of $f$.  In particular $Z_p$ is the zero locus of $f|_{X_p} = (f|_{X_0})|_{X_p}$.  Additionally $Z_p$ is the preimage of $Z_0$ under any of the $p+1$ simplicial maps $X_p \to X_0$.

Now the function $f|_{X_0}$ is the norm squared of a section of a hermitian line bundle of type $k$.  Thus as in Section \ref{LHT_DM_stacks} above we can perturb $f|_{X_0}$ to a function which we call $\phi_0$ which
\begin{itemize}
\item satisfies a {\L}ojasiewicz inequality near its global minimum $Z_0$,
\item is Morse(-Bott) away from $Z_0$ with critical points of index $\geq k$.
\end{itemize}
Define the functions $\phi_p = \phi_0 |_{X_p}$ using the $0^{th}$ simplicial face map $X_p \to X_0$ to restrict to $X_p$.  Note that $\phi_0$ is no longer invariant (does not necessarily descend to a smooth function on $\X$), so we must be explicit about which simplicial maps we use.  Both bulleted properties are preserved when one restricts along a surjective submersion, so the $\phi_p$ are functions with {\L}ojasiewicz global minima $Z_p = \phi_p^{-1} \{0\}$ that are Morse-Bott away from $Z_p$ with critical manifolds of index $\geq k$.  It follows from non-degenerate Morse theory that $H_i(X_p,Z_p) = H^i(X_p,Z_p) = 0$ for all $i<k$.

From the bicomplex computing $H_\ast (\X,\Z)$ we have a homological spectral sequence
\begin{equation}\label{eqn_loc_to_glob}
E^1_{p,q} = H_q (X_p, Z_p) \Rightarrow H_{p+q}(\X, \Z)
\end{equation}
so the vanishing of $H_i(X_p,Z_p)$ implies $H_i(\X,\Z) = 0$ for $i<k$.  A similar spectral sequence implies the result for cohomology.
\end{proof}

\begin{ex}
This theorem applies to global quotients even when the group does not act holomorphically.  For instance let $G = \ZZ / 2 \ZZ$ acting on $\CP^l$ by complex conjugation $[z_0 : \cdots : z_l] \mapsto [\bar{z}_0 : \cdots : \bar{z}_l]$.  By identifying $\CC^{l+1} = \CC \otimes \RR^{l+1}$, real subspaces of $\RR^{l+1}$ induce complex subspaces of $\CC^{l+1}$, and thus we have an embedding $\RP^l \subset \CP^l$ as the fixed locus of $G$.

Complex conjugation acts equivariantly on $\OO_{\CP^l}(1)$, and an invariant holomorphic section corresponds to a form with real coefficients $s = r_0 z_0 + \cdots r_l z_l$.  The zero locus of $s$ is a hypersurface $H \subset \CP^l$ induced by a real hypersurface in $\RR^{l+1}$, and by a real coordinate change we may assume that $H = \{ [0 : \ast : \cdots \ast ] \}$.

Let $p = [1:0:\cdots:0] \in \CP^l$.  Then $\CP^l - \{p\}$ deformation retracts equivariantly onto $H$, so we have
\begin{align*}
H^i_G (\CP^l, H) &= H^i_G (\CP^l, \CP^l - \{p\}) = H^i_G (\CC^l, \CC^l - \{0\}) \\
&\simeq H^{i-l} ( \real{[\ast / \ZZ/2]} ; \Omega )
\end{align*}
In this last expression, we have used the Thom isomorphism for the bundle $[\CC^l / G] \to [\ast/G]$, and $\Omega$ is a local system on $\real{[\ast/G]}$ which depends on the orientability of this bundle.  Thus we have $H^i_{G}(\CP^l, H) = 0$ for $i<l$.
\end{ex}

\begin{ex}
Non-holomorphic group actions also arise naturally as subgroups of the group of unit quaternions acting by left multiplication on $\mathbb{P}(\mathbb{H}^{l})$, where choose either $i,j,$ or $k$ as the complex structure on $\mathbb{H}^l$.
\end{ex}

\begin{ex}
Let $X$ be a projective variety of dimension $n$ and $\X \to X$ a gerbe for a compact group $G$, and let $Z \subset X$ be a hyperplane section.  The theorem implies that the cohomology of $\X$ agrees with the cohomology of the restricted gerbe $\Z \to Z$ in degree $<n-1$.

In this case we can also see this by comparing the Leray-Serre spectral sequence $E_2^{p,q} = H^p (X; H^q(BG)) \Rightarrow H^{p+q}(\X)$ and the corresponding sequence for $\Z \to Z$.  The result follows because, by the classical \LHT, the restriction map on the $E^2$ page $H^p(X;H^q(BG)) \to H^p(Z;H^q(BG))$ is an isomorphism for $p<n-1$.
\end{ex}

Unlike Theorem \ref{thm_artin_LHT}, the Leray-Serre argument in the example above does not require $G$ to be compact, which suggests that Theorem \ref{thm_artin_LHT} might hold for a much larger class of Artin stacks.  We also observe that the homological \LHT\ also applies to global quotients by reductive groups, giving further evidence for a more general statement than Theorem \ref{thm_artin_LHT}.
\begin{cor}
Let $G$ be a compact Lie group and let the complexification $G_{\CC}$ act on a compact complex manifold $X$, and let $(\LL,\h)$ be a $G_\CC$-equivariant hermitian line bundle of type $k$ with invariant global section $s$ vanishing on $Z \subset X$, then $H_i([X/G_{\CC}] , [Z/G_{\CC}]) = H^i([X/G_{\CC}],[Z/G_{\CC}]) = 0$ for $i<k$.
\end{cor}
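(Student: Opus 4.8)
The plan is to deduce the corollary from Theorem \ref{thm_artin_LHT} by replacing the non-compact group $G_\CC$ with its maximal compact subgroup $G$. The point is that $X \to [X/G_\CC]$ is a submersion from a compact complex manifold but is \emph{not} proper, so Theorem \ref{thm_artin_LHT} does not apply to $[X/G_\CC]$ directly; however, $X \to [X/G]$ is a $G$-torsor, hence proper because $G$ is compact. Since $\LL$ is by hypothesis holomorphic of type $k$ on $X$, the section $s|_X = s$ is holomorphic, and $Z \subset X$ is $G$-invariant (being the zero locus of the invariant section $s$), Theorem \ref{thm_artin_LHT} applies verbatim to $\X = [X/G]$ with atlas $X_0 = X$. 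This yields $H_i([X/G],[Z/G]) = H^i([X/G],[Z/G]) = 0$ for $i < k$.

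It then remains to identify the relative (co)homology of $([X/G_\CC],[Z/G_\CC])$ with that of $([X/G],[Z/G])$. The inclusion $G \hookrightarrow G_\CC$ is a homotopy equivalence --- by the polar (Cartan) decomposition $G_\CC \cong G \times \mathfrak{p}$ with $\mathfrak{p}$ a Euclidean space, equivalently $G_\CC/G$ is contractible --- and likewise $G^p \hookrightarrow G_\CC^p$ for every $p$. Hence the map of bar presentations $G^\bullet \times X \to G_\CC^\bullet \times X$ is a level-wise homotopy equivalence of simplicial manifolds, and similarly with $Z$ in place of $X$. Comparing the spectral sequences $E_1^{p,q} = H^q(X_p) \Rightarrow H^{p+q}(\X)$ (and their homological analogues) for $[X/G]$ and $[X/G_\CC]$, the morphism $[X/G] \to [X/G_\CC]$ induces isomorphisms $H^\ast([X/G_\CC]) \xrightarrow{\sim} H^\ast([X/G])$ and $H^\ast([Z/G_\CC]) \xrightarrow{\sim} H^\ast([Z/G])$ compatibly with the restriction maps, so the long exact sequences of the pairs and the five lemma give $H^\ast([X/G_\CC],[Z/G_\CC]) \cong H^\ast([X/G],[Z/G])$, and likewise in homology. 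Combining this with the previous paragraph finishes the proof.

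The step requiring the most care is this comparison of the two quotient stacks: one must use that the chain bicomplex invariant is a functor on stacks sending level-wise (co)homology isomorphisms of nerves to isomorphisms --- which is exactly the spectral sequence quoted earlier in the text --- and one should arrange the compatibility with the relative structure by choosing $Z_\bullet \subset X_\bullet$ as the full saturated subgroupoids on both sides, exactly as in the proof of Theorem \ref{thm_artin_LHT}. By contrast, the geometric input --- that $X \to [X/G]$ is a proper submersion and that $\LL$ and $s$ remain holomorphic of type $k$ on $X$ --- is immediate from the hypotheses.
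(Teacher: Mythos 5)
Your proposal is correct and follows essentially the same route as the paper: apply Theorem \ref{thm_artin_LHT} to $[X/G]$ (where the atlas $X \to [X/G]$ is proper because $G$ is compact), then use the deformation retraction of $G_\CC$ onto $G$ to get level-wise homotopy equivalences of the simplicial nerves and conclude via the spectral sequence \eqref{eqn_loc_to_glob} that the relative (co)homology of $([X/G_\CC],[Z/G_\CC])$ agrees with that of $([X/G],[Z/G])$. The only cosmetic difference is that the paper invokes the relative spectral sequence directly on the pairs, whereas you compare the absolute spectral sequences and then apply the five lemma; both are fine.
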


\begin{proof}
$G_{\CC}$ deformation retracts onto $G$, and so the pair $(X \times (G_{\CC})^{\times p}, Z \times (G_{\CC})^{\times p})$ deformation retracts onto $(X \times (G)^{\times p},Z \times (G)^{\times p})$ for any number of factors $p$.  Thus by the spectral sequence \eqref{eqn_loc_to_glob}, $H_i( [X/G] , [Z/G] ) \to H_i( [X/G_{\CC}], [Z/G_{\CC}])$ is an isomorphism, and $H_i([X/G] , [Z/G] ) = 0$ for $i<k$ by Theorem \ref{thm_artin_LHT}.
\end{proof}

\appendix
\section{Stability of gradient descent flow} \label{app_lojasiewicz}
We have used Morse-theoretic techniques to prove the \LHT\ without requiring the section of the line bundle to be non-degenerate as in \cite{Bott59}.  The global statements in the text above rely on the local results summarized in this appendix.  Here we prove the key fact: that the gradient descent flow of a function near its global minimum is \emph{stable} as long as it satisfies the following property

\begin{defn}
Let $X$ be a Riemannian manifold and $f$ a smooth function on $X$.  We sat that $f$ satisfies a \textit{{\L}ojasiewicz inequality} at a point $p$ if there are constants $\rho \in (0,1)$ and $C>0$ such that
\begin{equation*} \tag{L} \label{eqn_lojasiewicz}
|f - f(p)|^\rho \leq C | \nabla f|
\end{equation*}
in some neighborhood of $p$
\end{defn}

As a consequence of \eqref{eqn_lojasiewicz}, $f$ takes the value $f(p)$ at any critical point near $p$.  In particular if $Z$ is the global minimal set of $f$ and \eqref{eqn_lojasiewicz} holds at every point of $Z$, then $Z$ is isolated from the rest of the critical locus of $f$.
\begin{prop} \label{prop_stability_gradient_flow}
If $f$ achieves its global minimum along $Z \subset X$ and \eqref{eqn_lojasiewicz} holds at $p \in Z$, then there is an open set $U \subset X$ containing $p$ such that
\begin{enumerate}
\item the flow $\phi_t(x)$ of $-\nabla f$ is defined for all $x \in U$ and all $t\geq 0$, and $\phi_t(x) \in U$.
\item The map $U \times [0,\infty) \to U$ given by $(x, t) \mapsto \phi_t(x)$ extends uniquely to a continuous map $U \times [0,\infty] \to U$.  In particular the flow of $-\nabla f$ deformation retracts $U$ onto $U \cap Z$.
\end{enumerate}
\end{prop}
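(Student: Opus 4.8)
The plan is to run the classical {\L}ojasiewicz argument, whose heart is that trajectories of $-\nabla f$ issuing from a small neighbourhood of $p$ have \emph{uniformly bounded length}. After subtracting a constant so that $f(p)=0$ (hence $f\geq 0$ and $f^{\rho}\leq C|\nabla f|$ near $p$), fix a geodesic ball $B=B(p,2r)$ with $\overline{B}$ compact on which \eqref{eqn_lojasiewicz} holds. The key computation is that along a gradient trajectory $\phi_t(x)$ one has $\tfrac{d}{dt}f(\phi_t x)=-|\nabla f|^2$, and wherever $f>0$,
\[
\tfrac{d}{dt}\,f(\phi_t x)^{1-\rho}=-(1-\rho)\,f^{-\rho}|\nabla f|^2\leq -\tfrac{1-\rho}{C}\,|\nabla f|,
\]
using $f^{-\rho}\geq (C|\nabla f|)^{-1}$. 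Integrating, $\int_0^T|\nabla f(\phi_t x)|\,dt\leq \tfrac{C}{1-\rho}f(x)^{1-\rho}$, a bound independent of $T$ and valid as long as the trajectory stays in $B$ (once a trajectory meets $Z$ it is constant thereafter, since $Z$ consists of global minima, so the estimate is unaffected). Hence the continuous function $V(x):=d(x,p)+\tfrac{C}{1-\rho}f(x)^{1-\rho}$ is non-increasing along trajectories that remain in $B$, by the triangle inequality applied to the trajectory as a path.

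\textbf{Defining $U$ and proving (1).} Set $U:=\{x\in B : V(x)<r\}$; this is open, contains $p$, and contains a neighbourhood of $p$ in $Z$. For $x\in U$, an open--closed argument on the maximal interval of existence shows the trajectory never leaves $B(p,r)$: as long as it lies in $B$ we have $V(\phi_t x)\leq V(x)<r$, so $d(\phi_t x,p)<r<2r$, which feeds back to keep it in $B$. Since $\overline{B(p,r)}$ is compact, the escape lemma for ODEs forces the trajectory to be defined for all $t\geq 0$, and $V(\phi_t x)<r$ with $\phi_t x\in B(p,r)\subset B$ gives $\phi_t x\in U$.

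\textbf{Proving (2).} For fixed $x\in U$ the finite total length $\int_0^\infty|\nabla f(\phi_t x)|\,dt<\infty$ forces $\phi_t x$ to converge as $t\to\infty$ to a point $\phi_\infty(x)\in\overline{B(p,r)}$; and $\int_0^\infty|\nabla f|^2\,dt=f(x)-\lim_t f(\phi_t x)<\infty$ yields times $t_n\to\infty$ with $|\nabla f(\phi_{t_n}x)|\to 0$, so $f(\phi_{t_n}x)^\rho\leq C|\nabla f(\phi_{t_n}x)|\to 0$ and $\phi_\infty(x)\in Z$; also $V(\phi_\infty x)\leq V(x)<r$, so $\phi_\infty(x)\in U\cap Z$. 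It remains to show $(x,t)\mapsto\phi_t(x)$ is continuous on $U\times[0,\infty]$. On $U\times[0,\infty)$ this is ordinary smooth dependence on initial conditions. At $(x_0,\infty)$, given $\epsilon>0$ pick $T$ with $d(\phi_T x_0,\phi_\infty x_0)<\epsilon/3$ and $\tfrac{C}{1-\rho}f(\phi_T x_0)^{1-\rho}<\epsilon/3$; since $x\mapsto\phi_T x$ and $x\mapsto f(\phi_T x)$ are continuous, for $x$ in a neighbourhood of $x_0$ one gets $d(\phi_T x,\phi_T x_0)<\epsilon/3$ and the tail bound $\int_T^\infty|\nabla f(\phi_t x)|\,dt\leq\tfrac{C}{1-\rho}f(\phi_T x)^{1-\rho}<\epsilon/3$, whence $d(\phi_t x,\phi_\infty x_0)<\epsilon$ for all $t\in[T,\infty]$. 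Uniqueness of the extension follows from density of $[0,\infty)$ in $[0,\infty]$ and Hausdorffness. Finally $\phi_0=\id_U$, every point of $U\cap Z$ is fixed by every $\phi_t$, and $\phi_\infty(U)\subset U\cap Z$, so $\phi$ is the asserted (strong) deformation retract of $U$ onto $U\cap Z$.

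\textbf{Main obstacle.} The single genuinely non-formal ingredient is the finite-length estimate, i.e.\ extracting from the {\L}ojasiewicz inequality that $f^{1-\rho}$ decays along the flow at least at rate proportional to $|\nabla f|$; this is precisely {\L}ojasiewicz's insight, and without it trajectories could spiral and fail to converge. Everything afterwards --- forward invariance of $U$, completeness, convergence, and continuity at $t=\infty$ --- is a routine package of ODE continuous dependence and point-set topology, the only mild care being the uniform tail estimate used for continuity at infinity.
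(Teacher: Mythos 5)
Your proof is correct and follows essentially the same route as the paper's: the finite arc-length estimate $\int_0^\infty |\nabla f(\phi_t x)|\,dt \leq \tfrac{C}{1-\rho}f(x)^{1-\rho}$ derived from \eqref{eqn_lojasiewicz}, the escape lemma for completeness, Cauchy convergence of trajectories, and a uniform tail bound for continuity at $t=\infty$. The only (harmless) variations are cosmetic: you take $U$ to be a sublevel set of the Lyapunov function $V(x)=d(x,p)+\tfrac{C}{1-\rho}f(x)^{1-\rho}$ where the paper takes $U=\bigcup_{t\geq 0}\phi_t(B_r)$ for a suitably shrunken ball, and you identify $\phi_\infty(x)\in Z$ via a subsequence with $|\nabla f|\to 0$ where the paper uses the semigroup identity $\phi_s(\phi_\infty(x))=\phi_\infty(x)$.
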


\begin{rem}
The proposition is equally valid for a local minimum.  Also, the argument below shows that any gradient like vector field for $f$ will have the same stability property.
\end{rem}

\begin{proof}

For simplicity we will assume that $f\geq 0$ and $f(p) = 0$.  Let $\phi_t(x_0)$ be an integral curve starting at $x_0 \in X$.  As long as $\phi_t(x_0)$ stays in a region in which \eqref{eqn_lojasiewicz} holds, an arc length integral gives a uniform bound
\begin{equation} \label{eqn_dist_estimate}
\operatorname{dist} (x_0, \phi_t(x_0)) \leq \frac{C}{1-\rho}f(x_0)^{1-\rho} \qquad \text{for }t\geq0
\end{equation}

Start with a relatively compact open ball $B_{r^\prime}$ of radius $r^\prime$ around $p$ on which the inequality \eqref{eqn_lojasiewicz} holds.  Now we can find a smaller ball $B_r \subset B_{r^\prime}$ such that the difference in radii $r^\prime - r$ is larger than $C f(x_0)^{1-\rho}/(1-\rho)$ for any point $x_0 \in B_r$.  It follows from \eqref{eqn_dist_estimate} that any flow line starting in $B_{r}$ stays within $B_{r^\prime}$ under the (positive time) flow of $-\nabla f$, and thus by the escape lemma \cite{Lee03} the flow is defined on $B_r$ for all $t\geq 0$.  We let $U = \bigcup_{t\geq 0} \phi_t (B_r)$.

Inequality \eqref{eqn_lojasiewicz} implies that $f(\phi_t(x_0)) \to 0$ as $t \to \infty$.  Combining this with \eqref{eqn_dist_estimate} shows that $\phi_t (x_0)$ is Cauchy as $t\to \infty$ and remains in a compact region, so $\phi_\infty (x) = \lim_{t\to \infty}\phi_t (x)$ is a well defined function on $U$.  In fact \eqref{eqn_dist_estimate} shows that $(x,t) \mapsto \phi_t(x)$ is uniformly continuous as $t \to \infty$, and so the gradient descent flow extends uniquely to a continuous map $U \times [0,\infty] \to U$. Finally, $\phi_s( \phi_\infty (x)) = \lim_{t\to \infty} \phi_{s+t}(x) = \phi_\infty (x)$, so $\phi_\infty(U) \subset Z$ and the map constructed above is a deformation retract of $U$ onto $U\cap Z$.

\end{proof}

An immediate corollary of Proposition \ref{prop_stability_gradient_flow} is the global statement that
\begin{cor} \label{cor_stability_gradient_flow}
Let $f$ be a smooth nonnegative function on $X$ with global minimum $Z = f^{-1}\{0\} \subset X$.  If the inequality \eqref{eqn_lojasiewicz} holds at each $p\in Z$, then there is an open neighborhood $U$ of $Z$ on which the negative gradient flow extends uniquely to a map $U \times [0,\infty] \to U$ which is a deformation retract of $U$ onto $Z$.
\end{cor}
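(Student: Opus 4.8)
The plan is to globalize Proposition \ref{prop_stability_gradient_flow} in the only way available: apply it at every point of $Z$ and take the union of the resulting neighborhoods.  For each $p \in Z$ let $U_p \ni p$ be an open set on which $-\nabla f$ integrates to a flow $\phi_t$ that is defined for all $t \geq 0$, satisfies $\phi_t(U_p) \subseteq U_p$, and extends continuously over $t = \infty$ to a deformation retract of $U_p$ onto $U_p \cap Z$.  Set $U = \bigcup_{p \in Z} U_p$, which is an open neighborhood of $Z$.  The point to keep in mind throughout is that all of these local flows are restrictions of the single gradient flow of $-\nabla f$ on $X$, so no genuine compatibility issue can arise; the remaining work is bookkeeping.

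First I would check that the flow stays inside $U$, the only place confinement could fail: given $x \in U$, choose $p$ with $x \in U_p$; then the integral curve through $x$ remains in $U_p \subseteq U$ for all $t \geq 0$ by the proposition, so we have a flow map $U \times [0,\infty) \to U$.  Next, $\phi_\infty(x) := \lim_{t \to \infty} \phi_t(x)$ exists whenever $x \in U_p$, and since the limit depends only on the integral curve through $x$ it defines a map $\phi_\infty \colon U \to X$ with $\phi_\infty(U_p) \subseteq U_p \cap Z$, hence $\phi_\infty(U) \subseteq Z$.  For continuity of the extension, the continuous maps $\Phi_p \colon U_p \times [0,\infty] \to U_p$ supplied by the proposition agree pairwise on overlaps $(U_p \cap U_q) \times [0,\infty]$ --- they restrict to the same flow on $[0,\infty)$ and to the same $\phi_\infty$ at $\infty$ --- and $\{U_p \times [0,\infty]\}_{p \in Z}$ is an open cover of $U \times [0,\infty]$, so the $\Phi_p$ glue to a continuous map $\Phi \colon U \times [0,\infty] \to U$.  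This $\Phi$ is the unique continuous extension of the flow, because $U \times [0,\infty)$ is dense in $U \times [0,\infty]$ and $U$ is Hausdorff.

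It remains to see that $\Phi$ is a (strong) deformation retract of $U$ onto $Z$.  We already have $\Phi(-,0) = \id_U$ and $\Phi(U,\infty) \subseteq Z$.  Finally, since $f \geq 0$ vanishes on $Z$, every point of $Z$ is a local minimum of $f$, so $\nabla f$ vanishes identically along $Z$; hence $\phi_t$ fixes every $z \in Z$ for all finite $t$, and by continuity $\Phi(z,\infty) = z$ as well.  Thus $\Phi$ restricts to the identity on $Z$ at all times, which is exactly the asserted deformation retract.  As expected, there is no serious obstacle here: the only points needing care are the confinement of flow lines to $U$ and the agreement of the local extensions on overlaps, and both are immediate from the fact that everything descends from one global gradient flow on $X$.
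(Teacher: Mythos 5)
Your proof is correct and follows the same route the paper intends: the paper presents this corollary as an immediate consequence of Proposition \ref{prop_stability_gradient_flow}, obtained by taking the union of the local flow-invariant neighborhoods, exactly as you do. Your write-up simply makes explicit the gluing of the local extensions over the open cover $\{U_p \times [0,\infty]\}$ and the fact that $Z$ consists of critical points fixed by the flow, both of which are the right details to check.
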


In addition if $Z$ is compact then $X^{\epsilon]}$ deformation retracts onto $Z$ for sufficiently small $\epsilon$.

We have shown above that the {\L}ojasiewicz inequality implies a stable gradient flow, but we are left with the question of which functions satisfy \eqref{eqn_lojasiewicz}.  First of all, functions satisfying \eqref{eqn_lojasiewicz} have the following properties.

\begin{enumerate}
\item if $h$ is a smooth function with $h(p) \neq 0$ and $f$ satisfies \eqref{eqn_lojasiewicz} with $f(p) = 0$, then so does $h f$\\
\item Let $f$ and $g$ satisfy \eqref{eqn_lojasiewicz} with $f(p)=g(p)=0$.  If the angle between $\nabla f$ and $\nabla g$ is bounded away from $\pi$ near $p$ then $f + g$ satisfies \eqref{eqn_lojasiewicz},\footnote{More precisely the condition is $(\nabla f, \nabla g) \geq (-1+\epsilon) |\nabla f| |\nabla g|$ in a neighborhood of $p$ for some small $\epsilon$} and if the angle between $\nabla f^2$ and $\nabla g^2$ is bounded away from $\pi$ then $f g$ satisfies \eqref{eqn_lojasiewicz}.\\
\item if $\pi : X \to Y$ is an open mapping near $p\in X$ and $f \circ \pi$ satisfies \eqref{eqn_lojasiewicz} at $p$ then $f$ satisfies \eqref{eqn_lojasiewicz} at $\pi(p) \in Y$.  If $\pi$ is submersive at $p$, then the converse is true as well.  Letting $\pi$ be the identity map shows that \eqref{eqn_lojasiewicz} is independent of the metric. \\
\item if $\pi : X \to Y$ is proper and surjective onto a neighborhood of $q \in Y$ and $f \circ \pi$ satisfies \eqref{eqn_lojasiewicz} at every point in the fiber $\pi^{-1} \{q\}$, then $f$ satisfies \eqref{eqn_lojasiewicz} at $q$.
\end{enumerate}

One might wonder in light of property (2) above if arbitrary algebraic combinations of functions will still satisfy \eqref{eqn_lojasiewicz}, but this turns out to be false.  For instance if $\rho : \RR \to \RR$ vanishes to all orders at $0$, then $f(x) = (1+\rho) x - x = \rho x$ does not satisfy \eqref{eqn_lojasiewicz} at $0$ even though $(1+\rho)x$ and $x$ do.  Thus while inequality \eqref{eqn_lojasiewicz} has convenient dynamical properties, it lacks algebraic properties.  It is thus surprising that many functions do satisfy the {\L}ojasiewicz inequality:

\begin{thm} \label{app_stab_class}
Let $f$ be smooth function on $X$ which in some choice of local coordinates is a non-vanishing smooth function times a real analytic function which vanishes at $p$, then $f$ satisfies property \eqref{eqn_lojasiewicz} at $p$.
\end{thm}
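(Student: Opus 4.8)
The plan is to reduce immediately to the purely real-analytic case and invoke the classical {\L}ojasiewicz gradient inequality. Write $f = h \cdot a$ in the given coordinates, where $h$ is smooth with $h(p) \neq 0$ and $a$ is real analytic with $a(p) = 0$. By property (1) in the list above, it suffices to verify \eqref{eqn_lojasiewicz} for the real analytic function $a$ at $p$; so the entire content of the theorem is the statement that \emph{a real analytic function satisfies a {\L}ojasiewicz inequality at each of its zeros}. This is precisely {\L}ojasiewicz's theorem \cite{Lojasiewicz84}, and the honest task is to recall its proof (or at least sketch it), since the paper's stated goal is to make this classical analytic input self-contained.

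First I would recall the two ingredients. The main one is {\L}ojasiewicz's inequality comparing a real analytic function to the distance to its zero set: near $p$ there are $C_1 > 0$ and $\theta \in (0,1]$ with $|a(x)|^{\theta} \geq C_1 \,\operatorname{dist}(x, a^{-1}\{0\})$ — actually what I want is the \emph{gradient} form, $|\nabla a(x)| \geq C_2 |a(x) - a(p)|^{\rho}$ for some $\rho \in (0,1)$ in a neighborhood of $p$, which is exactly \eqref{eqn_lojasiewicz}. The standard route to the gradient inequality is via the curve selection lemma: if \eqref{eqn_lojasiewicz} failed for every $\rho < 1$, one produces, using semianalytic set theory (the zero set of the relevant inequality is semianalytic, hence its closure meets $p$ along a real analytic arc by curve selection), a real analytic path $\gamma(t)$ with $\gamma(0) = p$ along which $|\nabla a(\gamma(t))|$ decays faster than any power of $|a(\gamma(t)) - a(p)|$; differentiating $t \mapsto a(\gamma(t))$ and applying Cauchy–Schwarz to $\frac{d}{dt} a(\gamma(t)) = \langle \nabla a(\gamma(t)), \gamma'(t)\rangle$ then yields a contradiction by a Puiseux-expansion estimate on the orders of vanishing of $a \circ \gamma$ and $|\nabla a| \circ \gamma$.

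Then I would assemble the pieces: given $f = ha$, write $\nabla f = h \nabla a + a \nabla h$, so on a small enough neighborhood where $|h| \geq \delta > 0$ and $|a|, |\nabla h|$ are bounded, we get $|\nabla f| \geq \delta |\nabla a| - |a|\,|\nabla h| \geq \delta C_2 |a|^{\rho} - M|a|$. Since $\rho < 1$, after shrinking the neighborhood the term $\delta C_2 |a|^\rho$ dominates, giving $|\nabla f| \geq \frac{\delta C_2}{2} |a|^\rho$; and because $h$ is bounded above and below away from zero, $|a|^\rho$ is comparable to $|f|^\rho = |f - f(p)|^\rho$. This is \eqref{eqn_lojasiewicz} for $f$, with possibly a different constant and the same exponent $\rho$. (This last computation is the content of property (1), so strictly one could just cite it, but spelling it out costs nothing.)

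The main obstacle is the real-analytic gradient inequality itself, i.e.\ the curve selection lemma and the attendant semianalytic geometry — this is genuinely nontrivial and is the reason the paper defers it to an appendix and cites {\L}ojasiewicz. Everything on the ``smooth times analytic'' side is soft. One subtlety worth flagging: the exponent $\rho$ and the constants are not uniform over $p$ a priori, but for the applications in the body (Claim around \eqref{eqn_lojasiewicz} on $\X^{\epsilon]}$) one covers the compact set $\bar\Z$ by finitely many patches and takes the worst constants, so pointwise {\L}ojasiewicz is all that is needed here.
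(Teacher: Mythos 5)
Your reduction and your final statement are correct, but you take a genuinely different route from the paper. You use property (1) to strip off the smooth non-vanishing factor and then prove the real-analytic case by the classical {\L}ojasiewicz argument: curve selection for semianalytic sets plus a Puiseux-expansion comparison of the orders of vanishing of $a\circ\gamma$ and $|\nabla a|\circ\gamma$ along an analytic arc (this order comparison is what forces the exponent $\rho$ strictly below $1$, which is the whole point for the dynamical application). The paper instead invokes real-analytic resolution of singularities to produce a proper surjective analytic $\pi : Y \to X$ with $f\circ\pi = h(y)\,y_1^{\alpha_1}\cdots y_n^{\alpha_n}$, checks \eqref{eqn_lojasiewicz} by hand for the monomial (via property (2) or direct computation), removes the unit $h$ by property (1), and then descends the inequality along $\pi$ using property (4). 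The trade-off: the paper's route replaces curve selection and semianalytic geometry with Hironaka's theorem --- arguably a bigger hammer, but one that reduces the analytic content to a completely explicit monomial calculation, and it showcases the descent property (4), which is the same mechanism the paper needs elsewhere to move the inequality between atlases and stacks. Your route is the more traditional one and is self-contained modulo the curve selection lemma; your closing remark about covering $\bar{\Z}$ by finitely many patches to get uniform constants matches what the paper does in its Claim in Section \ref{LHT_DM_stacks}. One small slip worth fixing: your parenthetical ``distance'' form of the inequality is stated with the inequality reversed (the classical statement is $\operatorname{dist}(x, a^{-1}\{0\})^{N} \leq C\,|a(x)|$), but since you immediately discard it in favor of the gradient form, nothing downstream is affected.
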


\begin{proof}
Using real analytic resolution of singularities there is a proper surjective analytic map $\pi : Y \to X$ such that $f \circ \pi = h(y) y_1^{\alpha_1} \cdots y_n^{\alpha_n}$ where $y_1,\ldots,y_n$ are local analytic coordinates on $Y$ and $h$ is a smooth function which does not vanish near $\pi^{-1} \{p\}$.  Applying property (2) above, or by a direct calculation, one shows that the monomial $y_1^{\alpha_1} \cdots y_n^{\alpha_n}$ satisfies \eqref{eqn_lojasiewicz} along its zero locus, and thus by property (1) so does $f \circ \pi$.  Finally by property (4) $f$ satisfies \eqref{eqn_lojasiewicz} at $p$.
\end{proof}

{\L}ojasiewicz was the first to prove that real analytic functions satisfy the inequality \eqref{eqn_lojasiewicz} and apply it to the topology of real-analytic varieties.  Many authors have extended his work, for instance in \cite{KMP00}.

\bibliographystyle{amsalpha}
\bibliography{LHT_bibliography}

\end{document}